\newtheorem {theorem} {Theorem}
\newtheorem {proposition} [theorem]{Proposition}
\newtheorem {lemma}  [theorem]{Lemma}
\title[Planar quasi--homogeneous differential systems]
{GLOBAL DYNAMICS OF QUADRATIC AND CUBIC
PLANAR QUASI-HOMOGENEOUS DIFFERENTIAL
SYSTEMS}
\author[J. Llibre, Y. Tang, J. Yu, P. Zhou]
{Jaume Llibre, Yilei Tang, Jiang Yu, Pengyu Zhou}
\email{jllibre@mat.uab.cat(J. Llibre), mathtyl@sjtu.edu.cn (Y. Tang), jiangyu@sjtu.edu.cn(J. Yu), e-c-h-o@sjtu.edu.cn(P. Zhou)}
\keywords{Quasi-homogeneous differential system; global dynamics; blow-up; Poincaré compactification; normal sector method.}
\begin{document}

\begin{abstract}
 In this paper we obtain the global dynamics and phase portraits of quadratic and cubic quasi-homogeneous but non-homogeneous systems. We first prove that all planar quadratic and cubic quasi-homogeneous but non-homogeneous polynomial systems can be reduced to three homogeneous ones. Then for the homogeneous systems, we employ blow-up method, normal sector method, Poincaré compactification and other techniques to discuss their dynamics. Finally we characterize the global phase portraits of quadratic and cubic quasi-homogeneous but non-homogeneous polynomial systems.
\end{abstract}

\maketitle

\section{ Introduction }\label{s1}

Planar differential systems serve as a crucial tool for characterizing the dynamic relationships between two interacting entities. In disciplines such as automatic control theory, aerospace technology, biological sciences, and economics, the qualitative theory of planar differential systems is an indispensable mathematical tool. This paper will focus on the global structures of quasi-homogeneous differential system, which is a generalization of the homogeneous differential system.

Consider a real planar polynomial differential system
\begin{equation}\label{eqn-1}
	\dot{x}=P(x,y),\quad\dot{y}=Q(x,y),
\end{equation}
where $P(x,y)$, $Q(x,y)\in \mathbb{R}[x,y], PQ\neq 0.$ As usual, the dot denotes derivative with respect to an independent real variable t and $\mathbb{R}[x,y]$ denotes the ring of polynomials in the variables $x$ and $y$ with coefficiants in $\mathbb{R}$. We say that system \eqref{eqn-1} has degree $n$ if $n=\max\{\deg P,\deg Q\}$. In what follows we assume without loss of generality that $P$ and $Q$ in system \eqref{eqn-1} have not a non-constant common factor.

If $P(x_0, y_0)=Q(x_0, y_0)=0$, then $(x_0, y_0)$ is a singularity of the system \eqref{eqn-1}. Without loss of generality, we can move a singularity $(x_0, y_0)$ to the origin.

System \eqref{eqn-1} is called a \textit{quasi-homogeneous polynomial differential system} if there exists constants $s_1, s_2, d\in \mathbb{N}$ such that for an arbitrary $\alpha \in \mathbb{R_+}$ it holds that
\begin{equation}\label{eqn-2}
	P(\alpha^{s_1}x,\alpha^{s_2}y)=\alpha^{s_1+d-1}P(x,y),\quad Q(\alpha^{s_1}x,\alpha^{s_2}y)=\alpha^{s_2+d-1}Q(x,y),
\end{equation}
where $\mathbb{N}$ is the set of positive integers and $\mathbb{R_+}$ is the set of positive real numbers. We call $(s_1, s_2)$ \textit{weight exponents} of system \eqref{eqn-1} and $d$ \textit{weight degree} with respect to the weight exponents. Moreover, $w=(s_1, s_2, d)$ is a denominated \textit{weight vector} of system \eqref{eqn-1} or of its associated vector field. For a quasi-homogeneous polynomial differential system \eqref{eqn-1}, a weight vector $\widetilde{w}=(\widetilde{s}_1,\widetilde{s}_2,\tilde{d})$ is minimal for system \eqref{eqn-1} if any other weight vector $(s_1, s_2, d)$ of system \eqref{eqn-1} satisfies $\tilde{s}_1\leq s_1,\tilde{s}_2\leq s_2\mathrm{~and~}\tilde{d}\leq d$. When $s_1=s_2=1$, system \eqref{eqn-1} is a homogeneous one of degree $d$. In our following discussion, we consider the system as quasi-homogeneous but non-homogeneous polynomial systems.

We write the polynomials $P$ and $Q$ of system \eqref{eqn-1} in its homogeneous parts
\begin{equation}\label{eqn-3}
	P(x,y)=\sum_{j=0}^{l}P_{j}(x,y),\quad\mathrm{where~}P_{j}(x,y)=\sum_{i=0}^{j}a_{i,j-i}x^{i}y^{j-i},
\end{equation}
and
\begin{equation}\label{eqn-4}
	Q(x,y)=\sum_{j=0}^{l}Q_{j}(x,y),\quad\mathrm{where~}Q_{j}(x,y)=\sum_{i=0}^{j}b_{i,j-i}x^{i}y^{j-i},
\end{equation}

Then from  García et al. \cite{GARCIA20133185} we know that $(0,0)$ the unique finite singular point of system \eqref{eqn-1}.

In 1900, at the International Congress of Mathematicians, Hilbert proposed 23 famous mathematical problems. The second part of the 16th problem concerns the lower upper bound of limit cycles that an $n$th-degree planar polynomial differential system can have—denoted as $H(n)$, also known as the Hilbert number. While it is known that any given planar polynomial differential system has a finite number of limit cycles, no substantial results have been obtained for $H(n)$ when $ n > 1$. This problem lies at the intersection of algebraic geometry, topology, and the theory of dynamical systems, and continues to stimulate mathematical research more than a century after it was first posed.

Homogeneous differential systems are a class of special polynomial systems, which have been intensively investigated by many authors, see e.g. \cite{Sibirskii1977, Date1979, Vdovina1984,Newton1978} for quadratic homogeneous systems, \cite{Cima1990, Ye1995} for cubic homogeneous systems, and \cite{Cima1990, Llibre1996} for homogeneous systems of arbitrary degree. These papers either provide a characterization of the phase portraits of homogeneous polynomial vector fields of degrees 2 and 3, offer an algebraic classification of homogeneous vector fields, or describe the structurally stable homogeneous vector fields.

With the growing complexity of real-world systems, quasi-homogeneous polynomial systems have gained increased attention due to their structural generality and analytical flexibility. These systems, which generalize homogeneous ones by allowing different weights for each variable, provide a more nuanced framework for capturing dynamics in nonlinear contexts. Moreover, for a polynomial differential system, typically we write it in terms of its homogeneous monomials. When we use classical blow-up to characterize the degenerate singularities, maybe many times of blow-up are needed. However, when we use specific techniques, such as quasi-homogeneous blow-up to characterize degenerate singularities, writing the system in its quasi-homogeneous terms turned out to be more effective.

In 2005, Zhao, Zhang \cite{ZHAO2005563} considered $H$ as quasi-homogeneous polynomial system. It is well known that one of the primary methods for generating limit cycles is perturbing a system with a center. In the case of planar quasi-homogeneous polynomial systems, these systems exhibit certain distinctive properties, such as possessing a global center and having a period function with monotonicity, see e.g. \cite{Zhang2018}. Consequently, the bifurcation of limit cycles from quasi-homogeneous polynomial centers has attracted significant attention in the field of qualitative theory of differential equations, see e.g. \cite{Li2009}. These systems serve as a bridge between homogeneous and general polynomial systems, offering a more tractable yet nontrivial class for both theoretical analysis and computational studies. The quasi-homogeneous polynomial systems have also been researched from different point of view such as integrability, see \cite{Algaba2009, Gine2013, Goriely1996, Hu2007} and their references. all planar quasi-homogeneous vector fields are Liouvillian integrable, see e.g. \cite{Garcia2003, Garcia2013, Li2009}.

In the 21st century, the study of quasi-homogeneous systems has emerged as a prominent research topic. For example, García et al. \cite{GARCIA20133185} investigated the integrability of planar quasi-homogeneous polynomial differential systems, while Aziz et al. \cite{AZIZ2014233} explore the problems of quasi-homogeneous centers and limit cycle bifurcations. García et al. \cite{GARCIA20133185} also proposed an algorithm for obtaining the explicit expressions of all quasi-homogeneous but non-homogeneous systems and applies this algorithm to derive the canonical forms of all second and third degree quasi-homogeneous polynomial systems.

Building on this foundation, some researchers have further applied the algorithm to obtain the standard forms of quasi-homogeneous but non-homogeneous systems and analyze their dynamical behavior. For instance, Tang and Zhang \cite{TANG201990} explored fifth-degree quasi-homogeneous polynomial systems, and based on this, Martínez et al. \cite{MARTINEZ20165923} conducted a systematic analysis, providing their standard forms and determines the first integrals and global phase portraits of all fifth degree coprime systems. These comprehensive classifications have significantly advanced our understanding of the structure and complexity of quasi-homogeneous differential systems and paved the way for further exploration of higher-degree and more generalized system classes.

Another crucial aspect of the qualitative theory of planar differential systems is the study of global phase portraits. Since global phase portraits provide a clear and comprehensive depiction of a system's global dynamical behavior, they are of great significance in both theoretical research and practical applications. Understanding the global structure helps in classifying different types of dynamical behaviors, revealing symmetries, invariant sets, and bifurcation scenarios that cannot be detected through local analysis alone. Therefore, global phase portrait analysis has become an indispensable step in the full qualitative classification of differential systems.



\bigskip


\section{Preliminaries} \label{s2}

In this section we will introduce the primary analytical tools employed in this study, including the blow-up technique, the \text{Poincar\'{e}}
compactification, and relevant techniques for qualitative analysis, such as normal sector method.

Blow-up is the basic tool for studying non-elementary singularities of a differential system in the plane, based on changes of variables. We use this technique for classifying the nilpotent singularities; i.e., the singularities having both eigenvalues zero but whose linear part is not identically zero. Llibre et al. \cite{Dumortier2007} proved that at isolated singularities an analytic system has a finite senatorial decomposition, which means only finite steps are needed to blow-up a system sufficiently. 

Consider the planar homogeneous polynomial vector field of degree $n>1$
\begin{equation}\label{eqn-5}
	\mathcal{H}_{n}:\dot{x}=\sum_{i+j=n}c_{ij}x^{i}y^{j}:=P_{n}(x,y),\quad\dot{y}=\sum_{i+j=n}d_{ij}x^{i}y^{j}:=Q_{n}(x,y),
\end{equation}
with $c_{ij},d_{ij}\in\mathbb{R}$ not all equal to zero, and $P_n$ and $Q_n$ coprime. We take the change of variables
\begin{equation}\label{eqn-6}
	x=x, \quad y=ux,
\end{equation}
and transform system \eqref{eqn-5} into 
\begin{equation}\label{eqn-7}
	\begin{aligned}
		\frac{dx}{d\tau} &= xP_n(1,u) = x\sum_{i+j=n} c_{ij} u^j, \\
		\frac{du}{d\tau} &= G(1,u) = \sum_{i+j=n} (d_{ij} u^j - c_{ij} u^{j+1}),
	\end{aligned}
\end{equation}
after the time re-scaling $dt=d\tau/x^{n-1}$, where 
\begin{equation}\label{eqn-8}
	G(x,y):=xQ_n(x,y)-yP_n(x,y).
\end{equation}
Then the blow-up transformation decomposes the origin of system \eqref{eqn-5} into several simpler ones located on the $u$-axis of system \eqref{eqn-7}. Thus the zeros of $G(1,u)$ determine the singularities located on the $u$-axis of system \eqref{eqn-7}, which correspond to the characteristic directions of system \eqref{eqn-5} at the origin.

 Cima and Llibre \cite{CIMA1990420} presented a classification of the singularity at the origin of system \eqref{eqn-5}. Tang and Zhang \cite{TANG201990} provided more detail conditions for characterizing the singularity.

Suppose that there exists a $u_0$ satisfying $G(1,u_0)=0$, then $E=(0, u_0)$ is a singularity of system \eqref{eqn-7} and system \eqref{eqn-5} has the characteristic direction $\theta=\arctan(u_0)$ at the origin. Besides, $P_n(1,u_0)\neq 0$ since $G(1,u_0)=0$. Otherwise, $y-u_0x$ is a common factor of $Q_n(x,y)$ and $P_n(x,y)$, which is a contradiction with the fact that $Q_n(x,y)$ and $P_n(x,y)$ have not a non-constant common factor. Denote the derivative of $G(1,u)$ with respect to $u$ at $u_0$ by $G^{\prime}(1,u_0)$. Then we have the following lemma from  \cite{TANG201990}.

\begin{lemma}
	For the singularity $E=(0, u_0)$ of system \eqref{eqn-7}, the following statements hold:

(a)  For $G'(1, u_0) \neq 0$, the singularity $E$ is
	
	- either a saddle if $P_{n}( 1, u_{0}) G^{\prime }( 1, u_{0}) < 0$,
	
	- or a node if $P_{n}( 1, u_{0}) G^{\prime }( 1, u_{0}) > 0.$
	
	(b) For $G'(1, u_0)=0$, if $u_0$ is a zero of multiplicity $m> 1$ of $G( 1, u)$ , then $E$ is	

	- either a saddle if $m$ is odd and $P_{\boldsymbol{n}}(1,u_{0})G^{(m)}(1,u_{0})<0$,

	- or a node if $m$ is odd and $P_{n}(1,u_{0})G^{(m)}(1,u_{0})>0$,

	- or a saddle- node if $m$ is even. 
\end{lemma}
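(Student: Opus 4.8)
The plan is to read off the local type of $E=(0,u_0)$ directly from the special structure of system \eqref{eqn-7}, in which the $u$-equation $\dot u = G(1,u)$ is completely decoupled from $x$. First I would compute the Jacobian of \eqref{eqn-7} at $E$. Since $\partial_x\!\big(xP_n(1,u)\big)=P_n(1,u)$, $\partial_u\!\big(xP_n(1,u)\big)=xP_n'(1,u)$, and $G(1,u)$ is independent of $x$, the Jacobian at $(0,u_0)$ is the diagonal matrix $\mathrm{diag}\big(P_n(1,u_0),\,G'(1,u_0)\big)$, whose eigenvalues are the real numbers $P_n(1,u_0)$ and $G'(1,u_0)$, with eigendirections the $x$-axis and the $u$-axis respectively.

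For part (a), I would use that $P_n(1,u_0)\neq 0$ (already established just before the lemma) together with the hypothesis $G'(1,u_0)\neq 0$: both eigenvalues are then nonzero reals, so $E$ is hyperbolic and, by the Hartman--Grobman theorem, its phase portrait coincides with that of its linear part. As the eigenvalues are real, $E$ is a saddle when they have opposite signs, i.e. $P_n(1,u_0)G'(1,u_0)<0$, and a node when they have the same sign, i.e. $P_n(1,u_0)G'(1,u_0)>0$. This disposes of (a) with no further work, and the real (diagonal) spectrum rules out any focus or center alternative.

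For part (b), where $G'(1,u_0)=0$ while $P_n(1,u_0)\neq 0$, the point $E$ is semi-hyperbolic. The key observation is that the line $\{x=0\}$ is invariant (on it $\dot x\equiv 0$) and tangent to the eigenspace of the zero eigenvalue, so it is exactly a local center manifold, and the reduced flow on it is the one-dimensional system $\dot u = G(1,u)$. Writing $G(1,u)=\dfrac{G^{(m)}(1,u_0)}{m!}(u-u_0)^m+O\big((u-u_0)^{m+1}\big)$, with $m\geq 2$ the multiplicity of the zero, I would invoke the semi-hyperbolic classification theorem (Theorem~2.19 in \cite{Dumortier2007}). When $m$ is even the reduced flow keeps one sign on both sides of $u_0$, so $u_0$ is semi-stable and $E$ is a saddle-node irrespective of the remaining signs. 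When $m$ is odd the center direction is itself hyperbolic-like, attracting or repelling according to the sign of $G^{(m)}(1,u_0)$; comparing this with the stability of the $x$-direction, which is governed by the sign of $P_n(1,u_0)$, yields a node when the two stabilities agree, i.e. $P_n(1,u_0)G^{(m)}(1,u_0)>0$, and a saddle when they disagree, i.e. $P_n(1,u_0)G^{(m)}(1,u_0)<0$.

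The Jacobian computation and the hyperbolic case (a) are immediate; the only place demanding care is (b), namely correctly matching the parity of $m$ and the signs of $P_n(1,u_0)$ and $G^{(m)}(1,u_0)$ to the node/saddle/saddle-node trichotomy. Because the decoupled structure makes the center manifold explicit (it is $\{x=0\}$) and the reduced flow a single leading power of $(u-u_0)$, this sign bookkeeping -- rather than any deep analytic difficulty -- is the main obstacle, and it is resolved by directly inspecting the flow along the two invariant directions.
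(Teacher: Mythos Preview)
Your argument is correct. The Jacobian of \eqref{eqn-7} at $(0,u_0)$ is indeed the diagonal matrix $\mathrm{diag}\big(P_n(1,u_0),\,G'(1,u_0)\big)$, which immediately gives part (a). For part (b) your observation that $\{x=0\}$ is invariant and therefore serves as an explicit center manifold, with reduced dynamics $\dot u=G(1,u)$ and leading term $\dfrac{G^{(m)}(1,u_0)}{m!}(u-u_0)^m$, puts you exactly in the hypotheses of the semi-hyperbolic classification theorem (Theorem~2.19 in \cite{Dumortier2007}); the sign matching you describe is the correct one.

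As for a comparison: the paper does not prove this lemma at all. It is stated as a result quoted from Tang and Zhang \cite{TANG201990} (``Then we have the following lemma from \cite{TANG201990}''), so there is no in-paper proof to compare against. Your proposal therefore supplies a self-contained proof where the paper simply cites one; the argument you give is the standard one and is in all likelihood essentially what appears in \cite{TANG201990}, since the decoupled structure of \eqref{eqn-7} leaves little room for a genuinely different approach.
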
 

Note that the change  \eqref{eqn-6} is singular on $x=0$. We now determine the number of orbits approach the origin and are tangent to the $y$–axis. We here apply the method of normal sector. For system \eqref{eqn-5}, the origin is the unique finite singularity. 

	Let $\triangle OAB$ be a sector-shaped region bounded by the radius vectors $\overrightarrow{OA}$ and $\overrightarrow{OB}$, centered at the singular point $O$, and enclosed by the arc $AB$. The region $\triangle OAB$ is called a \textit{normal sector} if the following conditions are satisfied:
	\begin{enumerate}
		\item The region $\triangle OAB$ contains no singular points other than $O$, and the segments $OA$ and $OB$ are regular (i.e., non-singular) except at the point $O$;
		\item At any point within $\triangle OAB$, the direction field is not perpendicular to the coordinate axes;
		\item The region $\triangle OAB$ contains at most             one characteristic direction, and neither              $\overrightarrow{OA}$ nor                   $\overrightarrow{OB}$ lies along a characteristic   direction (i.e., the angles they form with the $x$-axis             are not characteristic).
	\end{enumerate}

Obviously there only three classes of normal sectors, showed separately in Figure 1. Then we have the following criteria:
\begin{itemize}
	\item[(i)] If $\triangle AOB$ is a normal sector in the first class of Figure 1, then system \eqref{eqn-5} has infinitely many orbits approaching $O$ in $\triangle AOB$ as $t \to +\infty$.
	
	\item[(ii)] If $\triangle AOB$ is a normal sector in the second class of Figure 1, then system \eqref{eqn-5} has either a unique orbit approaching $O$ in $\triangle AOB$ as $t \to +\infty$ or infinitely many orbits approaching $O$ in $\triangle AOB$ as $t \to +\infty$.
	
	\item[(iii)] If $\triangle AOB$ is a normal sector in the third class of Figure 1, then system \eqref{eqn-5} has either no orbits approaching $O$ in $\triangle AOB$ as $t \to +\infty$ or infinitely many orbits approaching $O$ in $\triangle AOB$ as $t \to +\infty$.
\end{itemize}

\begin{figure}[h]
	\centering
	\includegraphics[width=0.8\textwidth]{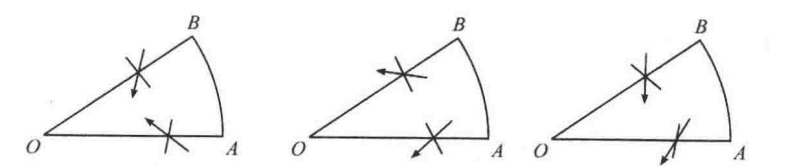} 
	\caption{Three classes of normal sectors}
	\label{fig:example}
\end{figure}

Applying the polar coordinate change $x=r\cos\theta$ and $y=r\sin\theta$, system \eqref{eqn-5} can be written in 
\begin{equation}
	\frac{1}{r}\frac{dr}{d\theta}=\frac{\tilde{H}(\theta)}{\tilde{G}(\theta)},
\end{equation}
where
$$\begin{aligned}\tilde{G}(\theta)&=G(\cos\theta,\sin\theta),\\\tilde{H}(\theta)&=H(\cos\theta,\sin\theta),\\H(x,y)&:=yQ_{n}(x,y)+xP_{n}(x,y).\end{aligned}$$

Hence a necessary condition for $\theta=\theta_0$ to be a characteristic direction at the origin is $G(\cos\theta_{0},\sin\theta_{0})=0$.

If $u_0$ is a root of equation $G(1,u)=0$, then $H(1,u_0)\neq 0$ since 
\begin{equation}
	H(1,u_0)=(1+u_0^2)P_n(1,u_0),
\end{equation}
and $P_n(1,u_0)\neq 0$ from the fact that $Q_n(x,y)$ and $P_n(x,y)$ are coprime.

The next lemma is about the trajectories of system \eqref{eqn-5} at the origin along the characteristic direction $\theta=\frac{\pi}{2}$, i.e. $G(v,1)=0$ at $v=0$, where $v=\frac{x}{y}$.

\begin{lemma}
	Assume that $\theta=\frac{\pi}{2}$ is a zero of multiplicity $m$ of $\tilde{G}(\theta)$. The following statements hold:
	
	(a) If $m > 0$ is even, there exists infinitely many orbits connecting the origin of system \eqref{eqn-5} and being tangent to the $y$–axis at the origin.
	
	(b) If $m>0$ is odd, there exists either infinitely many orbits if $\widetilde{G}^{(m)}(\frac{\pi}{2})\widetilde{H}(\frac{\pi}{2})>0$, or exactly one orbit if $\widetilde{G}^{(m)}(\frac{\pi}{2})\widetilde{H}(\frac{\pi}{2})<0$, connecting the origin of system \eqref{eqn-5} and being tangent to the $y$–axis at the origin.
\end{lemma}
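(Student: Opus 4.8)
The plan is to work directly with the polar form $\frac{1}{r}\frac{dr}{d\theta}=\widetilde{H}(\theta)/\widetilde{G}(\theta)$ of system \eqref{eqn-5} in a neighbourhood of the characteristic direction $\theta=\pi/2$, and to decide from the behaviour of $r(\theta)$ as $\theta\to\pi/2$ how many orbits reach the origin tangentially to the $y$--axis. First I would record the two facts that make the analysis run. Since $\theta=\pi/2$ is a zero of $\widetilde{G}$ we have $\widetilde{G}(\pi/2)=G(0,1)=-P_n(0,1)=0$, hence $P_n(0,1)=0$; coprimality of $P_n,Q_n$ then forces $Q_n(0,1)\neq0$, so that $\widetilde{H}(\pi/2)=H(0,1)=Q_n(0,1)\neq0$ (the $\theta=\pi/2$ analogue of the identity $H(1,u_0)=(1+u_0^2)P_n(1,u_0)$, here giving $H(0,1)=Q_n(0,1)$). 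Moreover $P_n(0,1)=0$ means $P_n(0,y)\equiv0$, so the line $x=0$ is invariant; as the origin is the only finite singularity, the ray $\{x=0,\ y>0\}$ is a single orbit having the origin as a limit point and tangent to the $y$--axis.

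Writing $\psi=\theta-\pi/2$, I would Taylor expand $\widetilde{G}(\theta)=A\psi^{m}(1+O(\psi))$ with $A=\widetilde{G}^{(m)}(\pi/2)/m!\neq0$, and $\widetilde{H}(\theta)=B(1+O(\psi))$ with $B=\widetilde{H}(\pi/2)\neq0$, so that
\[
\frac{d\ln r}{d\psi}=\frac{B}{A}\,\psi^{-m}\bigl(1+O(\psi)\bigr).
\]
Integrating the leading term gives $\ln r\sim\frac{B}{A}\ln|\psi|$ when $m=1$ and $\ln r\sim\frac{B}{A(1-m)}\,\psi^{1-m}$ when $m\ge2$, while the $O(\psi)$--correction integrates to a quantity of strictly lower order that cannot reverse the divergence. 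Hence whether $r\to0$ (the orbit reaches the origin as $\theta\to\pi/2$) or $r\to\infty$ (off--axis orbits escape) is decided entirely by the leading term, i.e.\ by the sign of $AB$ and the parity of $m$, noting $\operatorname{sign}(AB)=\operatorname{sign}\bigl(\widetilde{G}^{(m)}(\pi/2)\widetilde{H}(\pi/2)\bigr)$.

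The statement then follows by bookkeeping on the two sides $\psi\to0^{\pm}$. For $m$ even, $\psi^{1-m}$ has opposite signs on the two sides, so $r\to0$ on exactly one side; every graph $r(\theta)$ on that side is a distinct orbit tangent to the $y$--axis, giving infinitely many orbits irrespective of the sign of $AB$, which is case (a). For $m$ odd, $\psi^{1-m}$ (resp.\ $\ln|\psi|$ when $m=1$) behaves identically on both sides: if $AB>0$ then $r\to0$ on both sides and one obtains a one--parameter family of orbits, i.e.\ infinitely many; if $AB<0$ then $r\to\infty$ on both sides, no off--axis orbit limits to the origin in this direction, and the only remaining orbit is the invariant ray $\{x=0,\ y>0\}$ found above, i.e.\ exactly one orbit. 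This is precisely case (b).

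The main obstacle is the asymptotic integration together with the counting it feeds. I must justify rigorously that the $O(\psi)$ error in $\frac{d\ln r}{d\psi}$ integrates to something of order smaller than the divergent leading primitive, so that the dichotomy $r\to0$ versus $r\to\infty$ is genuinely decided by $AB$; and I must argue that $\widetilde{G}$ keeps a constant sign on each side of $\pi/2$, so that $r(\theta)$ is monotone there and no off--axis orbit can spiral back to the origin in the $r\to\infty$ case, while confirming that the invariant ray is then the unique such orbit and is not double--counted in the $r\to0$ cases. As a geometric cross--check I would keep in reserve the blow--up $x=vy$, which turns $\theta=\pi/2$ into the singularity $(v,y)=(0,0)$ of $\dot v=-G(v,1),\ \dot y=yQ_n(v,1)$; here $m$ equals the multiplicity of $v=0$ as a zero of $G(v,1)$, and the earlier classification lemma (with $P_n(1,u_0)$ and $G'(1,u_0)$ replaced by $Q_n(0,1)$ and $-G'(0,1)$) identifies the singularity as a node (infinitely many orbits through $y>0$), a saddle (the single ray), or, for $m$ even, a saddle--node with a parabolic sector (again infinitely many), in agreement with the polar computation.
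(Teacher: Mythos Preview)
The paper does not actually supply a proof of this lemma; it is stated as a preliminary result (implicitly from \cite{TANG201990}), with the polar equation $\frac{1}{r}\frac{dr}{d\theta}=\widetilde{H}(\theta)/\widetilde{G}(\theta)$ and the normal--sector criteria set up immediately before it as the intended machinery. Your argument makes that machinery explicit and is correct: the observation $\widetilde{G}(\pi/2)=-P_n(0,1)=0\Rightarrow P_n(0,1)=0\Rightarrow\widetilde{H}(\pi/2)=Q_n(0,1)\neq0$ (by coprimality) and the invariance of $x=0$ are exactly what is needed, and the asymptotic integration of $\frac{d\ln r}{d\psi}\sim(B/A)\psi^{-m}$ with the parity bookkeeping on $\psi^{1-m}$ gives the dichotomy cleanly. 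The error term integrates to lower order than the divergent primitive, as you note, so the $r\to0$ versus $r\to\infty$ decision is indeed governed by $\operatorname{sign}(AB)=\operatorname{sign}\bigl(\widetilde{G}^{(m)}(\tfrac{\pi}{2})\widetilde{H}(\tfrac{\pi}{2})\bigr)$ and the parity of $m$.

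Your blow--up cross--check in the $v=x/y$ chart is exactly the Lemma~1 analogue the paper alludes to in the sentence introducing this lemma (``i.e.\ $G(v,1)=0$ at $v=0$''), so your two routes---direct polar asymptotics and the blow--up classification---are the same ones the paper has in mind. There is no gap; if anything, your write--up is more complete than what the paper provides.
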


In order to get the concrete parameter conditions determining the dynamics of system \eqref{eqn-1} at infinity, we need the \text{Poincar\'{e}}
 compactification. Generally, the discussion can be divided into some steps showed in Zhang, Ding's book \cite{zhang1985}.

First the \text{Poincar\'{e}}
 transformation 
\begin{equation}\label{eqn-9}
	x=\frac{1}{z},\quad y=\frac{u}{z},
\end{equation}
transforms system \eqref{eqn-1} into 
\begin{equation}\label{eqn-10}
	\begin{aligned}
		\frac{du}{dt} &= -uzP\left(\frac{1}{z},\frac{u}{z}\right) + zQ\left(\frac{1}{z},\frac{u}{z}\right), \\
		\frac{dz}{dt} &= -z^2P\left(\frac{1}{z},\frac{u}{z}\right).
	\end{aligned}
\end{equation}

After the time re-scaling $d\tau=dt/z^n$, the system \eqref{eqn-10} can be written as
\begin{equation}\label{eqn-11}
	\frac{du}{d\tau}=P^{*}\left(u,z\right),\quad\frac{dz}{d\tau}=Q^{*}\left(u,z\right),
\end{equation}
where the value of $n$ 	makes $P^{*}$, $Q^{*} $ irreducible polynomial.

Solve the equation $P^*\left(u,0\right)=0$, $Q^*\left(u,0\right)=0$ and classify each singularity $B(u,0)$, which shows the structure at the end of $y=ux$ of the system \eqref{eqn-1}.

Then we take \text{Poincar\'{e}}
 transformation
\begin{equation}\label{eqn-12}
	x=\frac{v}{z},\quad y=\frac{1}{z},
\end{equation}
to transform system \eqref{eqn-1} into
\begin{equation}\label{eqn-13}
	\begin{aligned}
		\frac{dv}{dt} &= zP\left(\frac{v}{z},\frac{1}{z}\right) - zvQ\left(\frac{v}{z},\frac{1}{z}\right) = \frac{\hat{P}(v,z)}{z^m}, \\
		\frac{dz}{dt} &= -z^2Q\left(\frac{v}{z},\frac{1}{z}\right) = \frac{\hat{Q}(v,z)}{z^m},
	\end{aligned}
\end{equation}
with similar time re-scaling $d\tau=dt/z^{m}$.

Classifying the origin of the system
\begin{equation}\label{eqn-14}
	\frac{dv}{d\tau}=\hat{P}(v,z),\quad\frac{dz}{d\tau}=\hat{Q}(v,z),
\end{equation}
will show the structure at the end of the $y$-axis of the system \eqref{eqn-1}.

We have the following lemma from García et al. \cite{GARCIA20133185} to have some basic analysis of the systems.
\begin{lemma}
	
	(a) The vanishing set of each linear factor (if exists) of $G(x, y)$ is an invariant line of the homogeneous system.
	
	(b) The origin of system \eqref{eqn-5} is a global center if and only if $$\int_{-\infty}^{\infty}\frac{P_{n}(1,u)}{G_{n}(1,u)}du=0.$$
\end{lemma}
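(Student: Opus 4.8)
The plan is to treat the two parts separately, proving (a) by the Darboux invariant-line criterion and (b) by an explicit polar-coordinate return-map computation. For (a), suppose $y-u_0x$ divides $G(x,y)=xQ_n-yP_n$. Since $G$ is homogeneous of degree $n+1$ and vanishes on the line $y=u_0x$, restricting to that line gives $x\bigl[Q_n(x,u_0x)-u_0P_n(x,u_0x)\bigr]\equiv 0$, hence $Q_n(x,u_0x)-u_0P_n(x,u_0x)\equiv 0$. Writing $\ell=y-u_0x$, its derivative along the flow is $\dot\ell=Q_n-u_0P_n$, which therefore vanishes on $\ell=0$; being a homogeneous polynomial of degree $n$ vanishing on the line, it is divisible by $\ell$, so $\dot\ell=K\ell$ with cofactor $K$ homogeneous of degree $n-1$, which is exactly the invariance condition. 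The vertical factor is handled identically: $G(0,y)=-yP_n(0,y)$, so $x\mid G$ forces $x\mid P_n$, whence $\dot x=P_n$ vanishes on $x=0$.

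For (b) I would first record that for a homogeneous field the scaling $(x,y)\mapsto(\lambda x,\lambda y)$ maps orbits to orbits, so any periodic orbit around the origin generates a whole annulus of them; consequently a center at the origin is automatically a global center, and it suffices to detect monodromy together with closure. In polar coordinates $\dot\theta=\tilde G(\theta)r^{n-1}$, so the origin is monodromic exactly when $\tilde G(\theta)\ne 0$ for all $\theta$, i.e. $G$ has no real linear factor (otherwise part (a) produces an invariant line through the origin, obstructing a center); finiteness of the integral already forces this. Under this condition $\theta$ is strictly monotone, the equation $\tfrac1r\tfrac{dr}{d\theta}=\tilde H(\theta)/\tilde G(\theta)$ integrates explicitly, and the Poincaré return map over one revolution is $r\mapsto r\exp\!\bigl(\int_0^{2\pi}\tilde H/\tilde G\,d\theta\bigr)$; the origin is a center iff this map is the identity, i.e. iff $\int_0^{2\pi}\tilde H/\tilde G\,d\theta=0$.

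It then remains to convert this angular integral into the stated one. The key algebraic identity is $H(1,u)=(1+u^2)P_n(1,u)+uG(1,u)$, verified by expanding $H=yQ_n+xP_n$ and $G=xQ_n-yP_n$ at $(1,u)$. Homogeneity of degree $n+1$ gives $\tilde H(\theta)/\tilde G(\theta)=H(1,u)/G(1,u)$ with $u=\tan\theta$, and the common factor $(-1)^{n+1}$ in both $H$ and $G$ shows $\tilde H/\tilde G$ is $\pi$-periodic, so the full-circle integral is twice the half-circle one. Substituting $u=\tan\theta$, $d\theta=du/(1+u^2)$ over $(-\pi/2,\pi/2)$ and using the identity yields $\int_{-\infty}^{\infty}\tfrac{P_n(1,u)}{G(1,u)}\,du+\int_{-\infty}^{\infty}\tfrac{u}{1+u^2}\,du$, and the second integral vanishes by oddness; hence the closure condition is equivalent to $\int_{-\infty}^{\infty}\tfrac{P_n(1,u)}{G(1,u)}\,du=0$.

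The main obstacle I anticipate is the rigorous handling of the improper integral: since $P_n(1,u)/G(1,u)$ decays only like $1/u$ at infinity, the integral exists only as the symmetric limit $\lim_{A\to\infty}\int_{-A}^{A}$, which is precisely what the $\tan\theta$ substitution produces — the tails at $u=\pm\infty$ correspond to approaching the vertical direction $\theta=\pm\pi/2$, and they cancel just as $\int u/(1+u^2)\,du$ does. Care is also needed to confirm that finiteness of the integral rules out real zeros of $G(1,u)$ and that $G(0,1)=-P_n(0,1)\ne 0$, so that monodromy genuinely holds and the reduction to the return map is legitimate.
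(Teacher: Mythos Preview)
The paper does not supply its own proof of this lemma; it is quoted without proof from Garc\'ia, Llibre and P\'erez del R\'io \cite{GARCIA20133185}, so there is no in-paper argument to compare your proposal against.

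Assessed on its own, your argument is correct. Part (a) is a clean Darboux-type verification: once $Q_n-u_0P_n$ is homogeneous of degree $n$ and vanishes on $y=u_0x$, divisibility by $y-u_0x$ is automatic, and your treatment of the vertical factor $x$ via $G(0,y)=-yP_n(0,y)$ is right. For part (b) the polar return-map computation together with the substitution $u=\tan\theta$ and the identity $H(1,u)=(1+u^2)P_n(1,u)+uG(1,u)$ are exactly the standard ingredients, and you correctly note that the $\pi$-periodicity of $\tilde H/\tilde G$ reduces the full revolution to a single half-period.

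The one point that deserves a firmer statement is the one you already flag: in the ``if'' direction, the hypothesis that $\int_{-\infty}^{\infty}P_n(1,u)/G(1,u)\,du=0$ is \emph{meaningful} must be read as requiring $G(1,u)$ to have no real zero and $G(0,1)=-P_n(0,1)\neq 0$. Without this reading the implication could fail, since a Cauchy principal value past a simple real pole of $P_n/G$ might accidentally vanish while the corresponding invariant line from part (a) still obstructs monodromy. With that interpretation in place, and with the symmetric-limit (principal value at infinity) understanding of the integral that your $\tan\theta$ substitution naturally produces, your argument is complete.
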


From the expression of $G(x,y)$ in \eqref{eqn-8}, we know that the origin of a homogeneous system cannot be a center if the degree of the system is even. Actually, in this case there exists a $u_0$ satisfying $G(1,u_0)=0$, and the system has a invariant line $y=u_0x$.

\bigskip


\section{Quasi-homogeneous polynomial systems of degree 2 and 3}
\label{s3}


In 2019, García et al. \cite{GARCIA20133185} provided an algorithm to compute quasi-homogeneous but non-homogeneous polynomial differential systems with a given degree. From the definition of the quasi-homogeneous polynomial system, they gave the necessary condition $P_n, Q_n$ should satisfy in system \eqref{eqn-3}, \eqref{eqn-4}. Thus, for a given degree of the system, their algorithm identify the monomials in the system in order according to the necessary condition and get all the possible forms. In this paper they also obtain all the quadratic and cubic quasi-homogeneous but non-homogeneous vector fields.
\begin{lemma}\label{lemma-1}
	A  quadratic quasi-homogeneous but non-homogeneous polynomial differential system after a re-scaling of the variables can be written in one of the following forms:
	
	(2a) $x^{\prime}=y^{2},y^{\prime}=x$, with minimal weight vector $(3,2,2)$.
	
	(2b) $x^{\prime}=axy,y^{\prime}=x+y^{2}$, with $a\neq0$ and minimal weight vector$(2,1,2)$.
	
	(2c) $x^{\prime}=x+y^{2},y^{\prime}=ay$, with $a\neq0$ and minimal weight vector $(2,1,1)$.
\end{lemma}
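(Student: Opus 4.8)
The plan is to work directly from the definition of quasi-homogeneity, reading \eqref{eqn-2} as a family of constraints that select which monomials may occur. Writing a general quadratic system $P=\sum_{i+j\le 2}a_{ij}x^iy^j$, $Q=\sum_{i+j\le 2}b_{ij}x^iy^j$ and substituting $(x,y)\mapsto(\alpha^{s_1}x,\alpha^{s_2}y)$, I would match powers of $\alpha$: condition \eqref{eqn-2} holds exactly when every monomial $x^iy^j$ present in $P$ has weighted degree $is_1+js_2=s_1+d-1$, and every monomial present in $Q$ has weighted degree $is_1+js_2=s_2+d-1$. The constant terms are then forced to vanish (so the origin is a singularity), and each surviving monomial imposes one linear Diophantine equation on $(s_1,s_2,d)$. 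Thus the classification reduces to deciding which subsets of the six monomials $\{x,y,x^2,xy,y^2\}$ can share a common weighted degree for $P$ and, simultaneously, for $Q$.

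Next I would cut the search down by symmetry and feasibility. Swapping $x\leftrightarrow y$ interchanges $P\leftrightarrow Q$ and $(s_1,s_2)\leftrightarrow(s_2,s_1)$, so without loss of generality $s_1\ge s_2$; moreover $s_1=s_2$ forces every monomial of $P$ and of $Q$ to have the same ordinary degree, i.e. the system is homogeneous, which is excluded. Hence $s_1>s_2$. Since $s_1+d-1$ must be realized by a monomial of ordinary degree at most $2$, and the heaviest such monomial is $x^2$ of weight $2s_1$, I get $d\le s_1+1$, with analogous bounds coming from $s_2+d-1$; this leaves only finitely many feasible weight configurations, each pinning down the admissible monomials of $P$ and $Q$.

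I would then impose the three remaining requirements in turn: the system must have degree exactly $2$ (some degree-$2$ monomial must survive), $P$ and $Q$ must be coprime (which eliminates every configuration in which $P$ and $Q$ retain a common factor $x$, $y$, or a common linear factor), and the weight vector must be minimal. Running through the finitely many cases, exactly three monomial patterns survive, with $d=1$ producing $P=a_{10}x+a_{02}y^2$, $Q=b_{01}y$ and $d=2$ producing $P=a_{11}xy$, $Q=b_{10}x+b_{02}y^2$ and $P=a_{02}y^2$, $Q=b_{10}x$. A final rescaling $x\to\lambda x$, $y\to\mu y$, $t\to\nu t$ normalizes the nonzero coefficients; the three scaling freedoms suffice to clear all coefficients in pattern (2a) but leave one essential parameter $a\ne0$ in the two patterns carrying three monomials, yielding precisely the forms (2a)--(2c). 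The minimal weight vectors $(3,2,2)$, $(2,1,2)$, $(2,1,1)$ are then read off and checked to be minimal by verifying that no smaller positive triple satisfies the governing equations.

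The main obstacle I anticipate is not any single computation but the completeness and bookkeeping of the case analysis: I must be sure that no admissible weight configuration is overlooked, that each discarded case genuinely fails either coprimality, the degree-$2$ requirement, or non-homogeneity, and that the surviving weight vectors are truly minimal rather than multiples of smaller ones. The coprimality step is the most delicate, since several configurations produce systems differing from the listed forms only by the presence of a shared factor (typically $x$ or $y$), and these must be excluded carefully; keeping precise track of when a vanishing coefficient forces such a common factor is where the argument needs the most attention.
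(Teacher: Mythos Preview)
The paper does not prove this lemma; it is quoted verbatim from Garc\'{\i}a, Llibre and P\'erez del R\'{\i}o \cite{GARCIA20133185}, where the classification is obtained by an algorithm that does exactly what you propose: read off from \eqref{eqn-2} which monomials can occur in $P$ and $Q$ for a given weight vector, then run through the finitely many admissible configurations of degree at most $2$. Your plan is therefore a correct reconstruction of the original argument, and the three patterns you isolate for $d=1$ and $d=2$ match (2a)--(2c) on the nose. The only point worth tightening is the phrase ``re-scaling of the variables'': to land precisely on the listed forms you must allow, in addition to $x\to\lambda x$, $y\to\mu y$, $t\to\nu t$, the swap $x\leftrightarrow y$ that trades $(s_1,s_2)$ for $(s_2,s_1)$; otherwise you would also have to list the mirror forms with $s_2>s_1$. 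You already invoke this symmetry when assuming $s_1>s_2$, so just make explicit that it is part of the normalisation.
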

\begin{lemma}\label{lemma-2}
	A  cubic quasi-homogeneous but non-homogeneous differential system after a re-scaling of the variables can be written in one of the following forms:
	
	(3a) $x^{\prime}=y(ax+by^{2}),y^{\prime}=x+y^{2}$, with $a\neq b$, or $x^{\prime}=y(ax\pm y^{2}),y^{\prime}=x$, and both with minimal weight vector $(2,1,2)$.
	
	(3b) $x^{\prime}=x^{2}+y^{3},y^{\prime}=axy$, with $a\neq0$ and minimal weight vector $(3,2,4)$.
	
	(3c) $x^{\prime}=y^{3},y^{\prime}=x^{2}$, with minimal weight vector $(4,3,6)$.
	
	(3d) $x^{\prime}=x(x+ay^{2}),y^{\prime}=y(bx+y^{2})$, with $(a,b)\neq(1,1)$ and minimal weight vector $(2,1,3)$.
	
	(3e) $x^{\prime}=axy^2,y^{\prime}=\pm x^2+y^3$, with $a\neq0$ and minimal weight vector $(3,2,5)$.
	
	(3f) $x^{\prime}=axy^{2},y^{\prime}=x+y^{3}$, with $a\neq0$ and minimal weight vector $(3,1,3)$.
	
	(3g) $x^{\prime}=ax+y^{3},y^{\prime}=y$, with $a\neq0$ and minimal weight vector is $(3,1,1)$.
\end{lemma}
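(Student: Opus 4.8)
The plan is to reconstruct the García et al. enumeration by turning quasi-homogeneity into finitely many linear Diophantine constraints on the monomial exponents and then exhausting the cubic case. First I would reformulate the defining relations \eqref{eqn-2}. Writing $P=\sum a_{ij}x^{i}y^{j}$ and $Q=\sum b_{ij}x^{i}y^{j}$, condition \eqref{eqn-2} holds for all $\alpha\in\mathbb{R}_{+}$ if and only if each nonzero monomial obeys a single linear equation: setting $N=d-1$, one needs
\[
s_{1}(i-1)+s_{2}j=N \quad\text{for every } a_{ij}\neq0,
\]
\[
s_{1}i+s_{2}(j-1)=N \quad\text{for every } b_{ij}\neq0.
\]
Equivalently, $P$ is weighted homogeneous of weighted degree $s_{1}+d-1$ and $Q$ of weighted degree $s_{2}+d-1$ for the weights $(s_{1},s_{2})$, so the admissible exponents of $P$ (resp. $Q$) are exactly the nonnegative lattice points on a fixed line $s_{1}i+s_{2}j=\text{const}$ in the exponent plane.

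Second, I would carry out the enumeration. Degree exactly $3$ means some monomial of total degree $3$ is present, so the relevant lines must meet the triangle $\{i,j\ge0,\ i+j\le3\}$. Non-homogeneity forces $s_{1}\neq s_{2}$, since $s_{1}=s_{2}$ would make both $P$ and $Q$ ordinary-homogeneous of one common degree; the involution $(x,y,P,Q)\mapsto(y,x,Q,P)$ exchanges $s_{1}\leftrightarrow s_{2}$, so it suffices to treat $s_{1}>s_{2}$ and record the images. Because only the ten monomials $1,x,y,\dots,y^{3}$ are available and each line carries at most a handful of them, the possible support patterns for $(P,Q)$ form a short finite list. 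For each pattern I would discard those for which $P$ and $Q$ share a non-constant factor (excluded by hypothesis) or which fail to be genuinely cubic, leaving precisely the supports underlying (3a)--(3g). I expect this exhaustion to be the main obstacle: one must argue that no admissible support has been overlooked and that every surviving support is realizable, which demands disciplined bookkeeping over the finitely many $(s_{1},s_{2})$ whose defining lines hit the degree-$3$ triangle.

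Third, for each surviving support I would use the re-scalings $x\mapsto\alpha x,\ y\mapsto\beta y,\ t\mapsto\gamma t$ with $\alpha,\beta,\gamma>0$ to normalize coefficients. Since these three parameters act multiplicatively on the few nonzero coefficients, one can set the structural coefficients to $1$ or $\pm1$ and read off the residual essential parameters together with the genericity conditions ($a\neq0$, $a\neq b$, $(a,b)\neq(1,1)$, and the signs $\pm$), recovering exactly the normal forms (3a)--(3g).

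Finally, I would establish minimality of each listed weight vector. For a fixed support the monomial equations above become a homogeneous linear system in $(s_{1},s_{2},N)$; solving it yields (for each of the listed supports) a one-parameter ray $(ks_{1}^{0},ks_{2}^{0},kN^{0})$ with $\gcd(s_{1}^{0},s_{2}^{0})=1$, whence $d=N+1=kN^{0}+1$. The componentwise-smallest member occurs at $k=1$, giving $(s_{1}^{0},s_{2}^{0},N^{0}+1)$, and a direct check shows this is $\le$ every other solution in all three components, which is exactly the minimality required. The quadratic list of Lemma~\ref{lemma-1} follows by the identical procedure restricted to $i+j\le2$.
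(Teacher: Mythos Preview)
The paper does not actually prove this lemma: it is stated as a quotation of the classification obtained by Garc\'ia, Llibre and P\'erez del R\'io \cite{GARCIA20133185}, and the surrounding text merely summarizes their method as ``identify the monomials in the system in order according to the necessary condition and get all the possible forms.'' Your proposal is precisely a reconstruction of that algorithm --- the Diophantine constraints $s_{1}(i-1)+s_{2}j=d-1$ for $P$ and $s_{1}i+s_{2}(j-1)=d-1$ for $Q$, the finite enumeration of supports inside $i+j\le 3$, the coprimality filter, the rescaling normalization, and the minimality check --- so methodologically you are doing exactly what the cited source does, only spelled out rather than invoked.

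One point to tighten: the $(x,y)\leftrightarrow(y,x)$ involution you use to reduce to $s_{1}>s_{2}$ is fine, but the final list in the lemma is \emph{not} presented modulo that involution (e.g.\ (3b) and (3e) are not swaps of each other), so when you ``record the images'' you must actually carry both halves through the rescaling step and check which pairs merge and which do not; otherwise the bookkeeping you flag as the main obstacle will undercount or overcount. Apart from that caveat, your outline is sound and matches the approach the paper defers to.
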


\bigskip


\section{The corresponding homogeneous polynomial systems}

Tang and Zhang \cite{TANG201990} provided a method to study global dynamics of planar quasi-homogeneous differential systems. They first proved that all planar quasi-homogeneous polynomial differential systems can be transformed into homogeneous differential systems.
\begin{lemma}
	Any quasi-homogeneous but non-homogeneous polynomial differential system \eqref{eqn-1} of degree $n$
	can be transformed into a homogeneous polynomial differential system by a change of variables being of the
	composition of the transformation $\tilde{x}=(\pm x)^{\frac{s_2}{\beta}}, \tilde{y}=(\pm y)^{\frac{s_1}{\beta}}$, where $\beta$ is a suitable non-negative integer.
\end{lemma}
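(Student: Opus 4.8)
The plan is to implement the change of variables explicitly, read off from the defining relation \eqref{eqn-2} what the substitution does to each monomial, and then verify that the transformed field is homogeneous once a harmless time rescaling is absorbed. First I would fix a weight vector $(s_1,s_2,d)$ (for instance the minimal one) and translate \eqref{eqn-2} into an arithmetic condition on exponents: comparing monomials on the two sides of \eqref{eqn-2} shows that every monomial $x^iy^j$ occurring in $P$ satisfies $is_1+js_2=s_1+d-1$, while every monomial $x^iy^j$ occurring in $Q$ satisfies $is_1+js_2=s_2+d-1$. These two identities are the only structural facts I will use.

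Next I would carry out the substitution $\tilde x=(\pm x)^{s_2/\beta}$, $\tilde y=(\pm y)^{s_1/\beta}$, so that on each open quadrant $x=\tilde x^{\beta/s_2}$ and $y=\tilde y^{\beta/s_1}$, the signs being chosen to keep the bases positive. By the chain rule,
\begin{equation*}
\dot{\tilde x}=\tfrac{s_2}{\beta}\,x^{\frac{s_2}{\beta}-1}P(x,y),\qquad \dot{\tilde y}=\tfrac{s_1}{\beta}\,y^{\frac{s_1}{\beta}-1}Q(x,y).
\end{equation*}
Substituting $x=\tilde x^{\beta/s_2}$, $y=\tilde y^{\beta/s_1}$ into each monomial, a monomial $x^iy^j$ of $P$ becomes a constant times $\tilde x^{(i-1)\beta/s_2+1}\tilde y^{j\beta/s_1}$, whose total degree, using $is_1+js_2=s_1+d-1$, equals $\frac{\beta(d-1)}{s_1s_2}+1$; the analogous computation with $is_1+js_2=s_2+d-1$ gives every monomial of $\dot{\tilde y}$ the same total degree. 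Thus after the substitution both components are homogeneous rational (Laurent) expressions of one common degree $D=\frac{\beta(d-1)}{s_1s_2}+1$. Choosing $\beta$ to be a positive common multiple of $s_1$ and $s_2$ (say $\beta=\mathrm{lcm}(s_1,s_2)$ or $\beta=s_1s_2$) makes all exponents integers.

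The only remaining defect is that some exponents may be negative, and I would clear these by the time rescaling that multiplies both equations by $\tilde x^{\beta/s_2-1}\tilde y^{\beta/s_1-1}$ (equivalently $d\tau=\tilde x^{1-\beta/s_2}\tilde y^{1-\beta/s_1}\,dt$). This preserves homogeneity and only raises the common degree to $\frac{\beta(s_1+s_2+d-1)}{s_1s_2}-1$. A short check using only $i,j\ge 0$ and $\beta\ge\max(s_1,s_2)$ shows that after this multiplication the $\tilde x$- and $\tilde y$-exponent of every monomial is a non-negative integer, so the output is a genuine homogeneous polynomial system. Finally I would note that the substitution is a diffeomorphism on each open quadrant and that the four sign choices $\pm$ glue these pieces into a global correspondence; since time rescaling leaves phase portraits invariant, this is exactly the ``composition'' asserted in the statement.

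I expect the main obstacle to be the exponent bookkeeping in the last step: one must verify that a \emph{single} monomial rescaling simultaneously clears the negative powers produced by the $i=0$ terms of $P$ and the $j=0$ terms of $Q$ (the worst cases $1-\beta/s_2$ and $1-\beta/s_1$), and that nothing turns fractional—this is precisely where $\beta$ being a common multiple of $s_1,s_2$ is needed. A secondary, more delicate point is the sign/domain issue created by the fractional powers: the map is defined only quadrant-by-quadrant, and the rescaling factor can change sign off the first quadrant, so one must track the induced orientation of time—a subtlety that is harmless for the phase-portrait classification carried out later but should be recorded.
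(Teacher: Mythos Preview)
The paper does not actually prove this lemma; it is quoted from Tang and Zhang \cite{TANG201990} and stated without argument. Your proposal supplies a correct self-contained proof, and the mechanism you describe is exactly what the paper relies on implicitly in its applications: Remark~1 and \eqref{eqn-15} take $\beta=s_1s_2$, i.e.\ $\tilde x=x^{s_2}$, $\tilde y=y^{s_1}$, and the proofs of Theorems~8 and~9 then compose this with a monomial time rescaling to land in $\mathcal H_0,\mathcal H_1,\mathcal H_2$. Your exponent bookkeeping (each monomial of $\dot{\tilde x}$ and $\dot{\tilde y}$ acquiring common degree $\frac{\beta(d-1)}{s_1s_2}+1$, and a single factor $\tilde x^{\beta/s_2-1}\tilde y^{\beta/s_1-1}$ clearing the worst negative powers) checks out, and your caveat about the fractional-power map being only a quadrant-wise diffeomorphism matches the paper's restriction to $x>0$ or $y>0$ and its subsequent use of the symmetry lemma.
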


\noindent\textbf{Remark 1}\quad
	From \cite{TANG201990} we know that the least common multiple of $s_1$ and $s_2$ is not the unique choice for $\beta$. Actually, we can select any common multiple of $s_1$ and $s_2$ for $\beta$. For some systems we can use the change of variables 
	\begin{equation}\label{eqn-15}
		\tilde{x}=x^{s_{2}},\quad\tilde{y}=y^{s_{1}}.
	\end{equation}	 
This method is more effective than analyzing the quasi-homogeneous polynomial system directly in many conditions. First, we have more conclusions for homogeneous polynomial vector fields, which helps us get the structure of the original system clearly. Second, transforming quasi-homogeneous polynomial systems to homogeneous ones can simplify the systems both in degree and in types in some conditions. For example, \cite{TANG201990} proved the 15 systems of planar quintic quasi-homogeneous but non-homogeneous polynomial differential systems can be transformed to four homogeneous systems with parameters, and their degree no more than three. Thus we only need to research the parameter conditions for simpler systems. Moreover, after the transformation, from the former discussions we can get that the separatrix of the homogeneous polynomial system is a line through the origin. This helps us to make the structure of the system clear in each sector.

Another useful lemma for us to get the global phase portraits is about symmetric. In \cite{TANG201990} it is proved that quasi-homogeneous systems are symmetric with respect to either an axis or the origin with possibly a time reverse. Using this fact we can obtain global dynamics of a quasi-homogeneous differential system through the dynamics of its associated homogeneous systems in a half plane. 

\begin{lemma}
	An arbitrary quasi-homogeneous but non-homogeneous polynomial differential system of degree $n$ with the minimal weight vector $(s_1, s_2, d)$ is invariant either under the change $(x,y)\to(x,-y)$ if $s_1$ is even and $s_2$ is odd, or under the change $(x,y)\to(-x,y)$ if $s_1$ is odd and $s_2$ is even, or under the change $(x,y)\to(-x,-y)$ if both $s_1$ and $s_2$ are odd, without taking into account the direction of the time.
\end{lemma}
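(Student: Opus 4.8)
The plan is to evaluate the defining quasi-homogeneity identities \eqref{eqn-2} at $\alpha=-1$. The first step, and the only delicate one, is to justify this substitution, since \eqref{eqn-2} is only postulated for $\alpha\in\mathbb{R}_+$. Fix $(x,y)$ and regard each side of \eqref{eqn-2} as a function of $\alpha$. Because $s_1,s_2$ are positive integers, expanding $P=\sum a_{ij}x^iy^j$ gives $P(\alpha^{s_1}x,\alpha^{s_2}y)=\sum a_{ij}\alpha^{is_1+js_2}x^iy^j$, a polynomial in $\alpha$; likewise $\alpha^{s_1+d-1}P(x,y)$ is a polynomial in $\alpha$, and similarly for the $Q$-identity. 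Two polynomials in $\alpha$ that agree on the infinite set $\mathbb{R}_+$ must agree identically, so \eqref{eqn-2} holds for all $\alpha\in\mathbb{R}$; in particular we may set $\alpha=-1$.

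Doing so gives
\begin{equation*}
P((-1)^{s_1}x,(-1)^{s_2}y)=(-1)^{s_1+d-1}P(x,y),\qquad Q((-1)^{s_1}x,(-1)^{s_2}y)=(-1)^{s_2+d-1}Q(x,y).
\end{equation*}
I would then split into the three parity cases. If $s_1$ is even and $s_2$ odd, then $(-1)^{s_1}=1$ and $(-1)^{s_2}=-1$, so these read $P(x,-y)=(-1)^{d-1}P(x,y)$ and $Q(x,-y)=(-1)^{d}Q(x,y)$. If $s_1$ is odd and $s_2$ even, they read $P(-x,y)=(-1)^{d}P(x,y)$ and $Q(-x,y)=(-1)^{d-1}Q(x,y)$. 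If both are odd, they read $P(-x,-y)=(-1)^{d}P(x,y)$ and $Q(-x,-y)=(-1)^{d}Q(x,y)$.

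Next I would convert each pair of functional identities into the asserted orbit symmetry. Take the first case and the reflection $(x,y)\mapsto(x,-y)$: if $(x(t),y(t))$ is a solution of \eqref{eqn-1} and we put $\bar x=x$, $\bar y=-y$, then the chain rule together with the two identities gives $\dot{\bar x}=P(x,y)=(-1)^{d-1}P(\bar x,\bar y)$ and $\dot{\bar y}=-Q(x,y)=-(-1)^{d}Q(\bar x,\bar y)=(-1)^{d-1}Q(\bar x,\bar y)$. Thus $(\bar x,\bar y)$ satisfies \eqref{eqn-1} multiplied by the single constant $(-1)^{d-1}$: when $d$ is odd this constant is $+1$, so the reflection sends orbits to orbits preserving time, and when $d$ is even it is $-1$, so it sends orbits to orbits reversing time. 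In either case the phase portrait is invariant under $(x,y)\mapsto(x,-y)$ up to the direction of time, which is exactly the claim. The two remaining cases are identical in structure: with the maps $(x,y)\mapsto(-x,y)$ and $(x,y)\mapsto(-x,-y)$ the reflection-induced sign and the sign from the functional identities again combine to the common factor $(-1)^{d-1}$ in both components.

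Finally I would remark that this argument uses only that $(s_1,s_2,d)$ is a weight vector, not its minimality; minimality (or, concretely, the explicit normal forms of Lemmas \ref{lemma-1} and \ref{lemma-2}) is invoked merely to ensure that $s_1$ and $s_2$ are never simultaneously even, so that the three listed parity combinations cover all cases. The whole proof is therefore a one-line substitution followed by sign bookkeeping, the sole point requiring care being the extension of \eqref{eqn-2} from positive $\alpha$ to $\alpha=-1$.
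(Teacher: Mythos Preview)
Your argument is correct. The substitution $\alpha=-1$ in \eqref{eqn-2}, justified via the polynomial-identity extension you give, produces exactly the three parity relations, and your sign bookkeeping showing that the reflected curve satisfies the same system up to the common factor $(-1)^{d-1}$ is accurate in all three cases. Your closing remark that minimality is only needed to exclude the case where both $s_1$ and $s_2$ are even (so that the three listed cases are exhaustive) is also on point, and is precisely the content of the paper's Remark~2.

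As for comparison: the paper does not actually prove this lemma. It quotes the statement from Tang and Zhang \cite{TANG201990} and uses it as a black box. So your proposal supplies a short, self-contained proof where the paper gives none; there is nothing further to compare.
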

\noindent\textbf{Remark 2}\quad
	From \cite{TANG201990} corollary 3 we know that if a quasi-homogeneous but non-homogeneous differential system \eqref{eqn-1} of degree $n$ has the minimal weight vector $\tilde{w}=(s_{1},s_{2},d)$, then at least one of $s_1$ and $s_2$ is odd.

Using these lemmas we can transform the quasi-homogeneous polynomial systems of degree two and three to homogeneous forms.
\begin{theorem}
	 A quasi-homogeneous but non-homogeneous quadratic polynomial differential system when restricted to either $y>0$ or $x>0$ can be transformed to one of the following two homogeneous systems:
	  
	  $\mathcal{H}_{1}$: $\dot{x}=c_{01}y+c_{10}x, \dot{y}=d_{01}y+d_{10}x$, with $c_{01}d_{10}\neq 0$, or $d_{10}=0, c_{01}c_{10}d_{01}\neq0$, or $c_{01}=0,c_{10}d_{01}d_{10}\neq0$,
	  
	  $\mathcal{H}_{0}$: $\dot{x}=c_{0},\dot{y}=d_{0}$, with $c_{0}d_{0}\neq 0$.
\end{theorem}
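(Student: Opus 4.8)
The plan is to start from the classification in Lemma~\ref{lemma-1}, which reduces every quadratic quasi-homogeneous but non-homogeneous system, up to a re-scaling of the variables, to exactly one of the three normal forms (2a), (2b), (2c). For each of these I will apply the weight-equalizing change of variables $\tilde{x}=x^{s_2}$, $\tilde{y}=y^{s_1}$ supplied by the transformation lemma of Tang and Zhang together with Remark~1, where $(s_1,s_2,d)$ is the minimal weight vector recorded in Lemma~\ref{lemma-1}. Under the scaling $(x,y)\mapsto(\alpha^{s_1}x,\alpha^{s_2}y)$ the new variables satisfy $\tilde{x}\mapsto\alpha^{s_1 s_2}\tilde{x}$ and $\tilde{y}\mapsto\alpha^{s_1 s_2}\tilde{y}$, so both carry the common weight $s_1 s_2$; this is precisely the mechanism that turns the quasi-homogeneous field into a genuinely homogeneous one. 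Because one of the exponents is even in each case, the substitution is invertible only after restricting to the half-plane on which the even power is monotone, which is the origin of the hypothesis ``restricted to either $y>0$ or $x>0$'': for (2a), where $(s_1,s_2)=(3,2)$, the map $\tilde{x}=x^2$ forces the restriction $x>0$, whereas for (2b) and (2c), where $(s_1,s_2)=(2,1)$, the map $\tilde{y}=y^2$ forces the restriction $y>0$.

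The computational core is then to differentiate $\tilde{x},\tilde{y}$ along the flow and read off a common scalar factor. For (2a) one finds $\dot{\tilde{x}}=2xy^2$ and $\dot{\tilde{y}}=3xy^2$, so after the time re-scaling $d\tau=(xy^2)\,dt$, admissible on $\{x>0,\ y\neq0\}$ where $xy^2>0$, the system collapses to $\dot{\tilde{x}}=2$, $\dot{\tilde{y}}=3$, i.e.\ the form $\mathcal{H}_0$ with $c_0=2$, $d_0=3$ and $c_0 d_0\neq0$. For (2b) one finds $\dot{\tilde{x}}=axy$ and $\dot{\tilde{y}}=2y(x+y^2)$, and the common factor $y$, nonzero on $\{y>0\}$, yields after $d\tau=y\,dt$ the linear field $\dot{\tilde{x}}=a\tilde{x}$, $\dot{\tilde{y}}=2\tilde{x}+2\tilde{y}$, an instance of $\mathcal{H}_1$ with $c_{10}=a$, $c_{01}=0$, $d_{10}=2$, $d_{01}=2$. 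For (2c) no re-scaling is needed: the substitution already gives $\dot{\tilde{x}}=\tilde{x}+\tilde{y}$, $\dot{\tilde{y}}=2a\tilde{y}$, an instance of $\mathcal{H}_1$ with $c_{10}=1$, $c_{01}=1$, $d_{10}=0$, $d_{01}=2a$.

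The final step is bookkeeping against the three alternative nondegeneracy clauses listed for $\mathcal{H}_1$. In case (2b) we have $c_{01}=0$ and $c_{10}d_{01}d_{10}=4a\neq0$ (using $a\neq0$), which is the third clause; in case (2c) we have $d_{10}=0$ and $c_{01}c_{10}d_{01}=2a\neq0$, which is the second clause. Thus (2a) lands in $\mathcal{H}_0$ while (2b) and (2c) each land in a distinct admissible clause of $\mathcal{H}_1$, so every normal form is accounted for and the reduction to the two stated homogeneous systems is complete.

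I do not expect a genuine obstruction here; the argument is essentially a finite, case-by-case verification once Lemma~\ref{lemma-1} is in hand. The only point requiring care is conceptual rather than computational: the substitution $\tilde{x}=x^{s_2},\ \tilde{y}=y^{s_1}$ is non-smooth and non-invertible across the coordinate axes, so one must state explicitly that it is a diffeomorphism only on the indicated open half-plane and that the accompanying time re-scaling by the common factor preserves the orientation of orbits there, the factor having constant sign on that half-plane. Keeping track of the admissible half-plane for each normal form, and confirming that the hypothesis $a\neq0$ propagates into the correct clause of the $\mathcal{H}_1$ condition, is the part most prone to slips.
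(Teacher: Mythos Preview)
Your proof is correct and follows essentially the same approach as the paper's: both start from Lemma~\ref{lemma-1}, apply the substitution $\tilde{x}=x^{s_2}$, $\tilde{y}=y^{s_1}$ from the Tang--Zhang lemma, and factor out the resulting common scalar to obtain the homogeneous forms $\tilde{(2a)}$, $\tilde{(2b)}$, $\tilde{(2c)}$ with the same time re-scalings (yours written in the original variables, the paper's in the new ones). Your additional bookkeeping, verifying that (2b) lands in the third clause and (2c) in the second clause of the $\mathcal{H}_1$ conditions, is a detail the paper leaves implicit.
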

	
\begin{proof}
	Using the change \eqref{eqn-15} and some time re-scaling, some calculations show that the quadratic quasi-homogeneous system in Lemma \ref{lemma-1} can be correspondingly transformed into a homogeneous one:
	
	$\tilde{(2a)}$: $\dot{x}= 2, \dot{y}=3 $, by $dt=dt_1/x^{\frac{1}{2}}y^{\frac{2}{3}}$, $x>0$,
	
	$\tilde{(2b)}$: $\dot{x}=ax, \dot{y}=2(x+y)$, with $a\neq 0$, by $dt=dt_1/y^{\frac{1}{2}}$, $y>0$,
	
	$\tilde{(2c)}$: $\dot{x}=x+y, \dot{y}=2ay$, with $a\neq 0$, $y>0$,
	where we still use $x, y$ replacing $\tilde{x}, \tilde{y}$ for simplifying notations. Note that each system is one of the two systems in the theorem. We complete the proof of the theorem.
\end{proof}
	
\begin{theorem}
	A quasi-homogeneous but non-homogeneous cubic polynomial differential system when restricted to either $y>0$ or $x>0$ can be transformed to one of the following three homogeneous systems:
	
	$\mathcal{H}_{2}:\dot{x}=c_{02}y^{2}+c_{11}xy+c_{20}x^{2}, \dot{y}=d_{02}y^{2}+d_{11}xy+d_{20}x^{2}$, with $c_{02}d_{20}\neq0$, or $c_{02}=d_{20}=0$, $c_{20}d_{02}\neq0$,

	$\mathcal{H}_{1}$: $\dot{x}=c_{01}y+c_{10}x, \dot{y}=d_{01}y+d_{10}x$, with $c_{01}d_{10}\neq 0$, or $d_{10}=0, c_{01}c_{10}d_{01}\neq0$, or $c_{01}=0,c_{10}d_{01}d_{10}\neq0$,
	
	$\mathcal{H}_{0}$: $\dot{x}=c_{0},\dot{y}=d_{0}$, with $c_{0}d_{0}\neq 0$.
\end{theorem}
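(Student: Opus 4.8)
\quad
The plan is to repeat verbatim the strategy of the preceding theorem, now running over the seven canonical forms (3a)--(3g) of Lemma \ref{lemma-2}. For each form, with its minimal weight vector $(s_1,s_2,d)$, I apply the weighted substitution \eqref{eqn-15}, that is $\tilde{x}=x^{s_2}$ and $\tilde{y}=y^{s_1}$, and then I clear the resulting fractional powers by a time re-scaling $dt=dt_1/\rho$, where $\rho$ is a monomial in $x,y$ chosen so that every component of the transformed field becomes a genuine polynomial. The substitution is a diffeomorphism onto its image once we restrict to the half-plane in which the variable carrying an even weight exponent is positive (and $\rho>0$ there, away from the axes); on that half-plane both the substitution and the re-scaling preserve the unparametrised orbits, so the phase portrait is transported faithfully. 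The remaining half-plane is then recovered from the symmetry recorded in Remark 2.

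Carrying out the chain rule in each case, I expect the following outcome. For (3a) (both subcases), (3b), (3e), (3f) and (3g) the transformed field is linear, so these reduce to $\mathcal{H}_{1}$; for (3d), where $(s_1,s_2)=(2,1)$, the substitution $\tilde{x}=x$, $\tilde{y}=y^{2}$ already produces a polynomial homogeneous field of degree two with no re-scaling needed, giving $\mathcal{H}_{2}$; and for (3c), where $(s_1,s_2)=(4,3)$, the two components share a common factor so that after dividing by $\rho$ the field is constant, giving $\mathcal{H}_{0}$. For instance, for (3d) one finds directly $\dot{x}=x^{2}+axy$, $\dot{y}=2bxy+2y^{2}$, and for (3g) one finds $\dot{x}=ax+y$, $\dot{y}=3y$.

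It then remains to check that the genericity hypotheses of Lemma \ref{lemma-2} (namely $a\neq b$, $a\neq 0$, or $(a,b)\neq(1,1)$ according to the case) translate exactly into the non-vanishing coefficient conditions that define $\mathcal{H}_{0}$, $\mathcal{H}_{1}$ and $\mathcal{H}_{2}$, in particular the coprimality of the two homogeneous components. This is a finite verification, one line per case, and I do not anticipate any case falling outside the three listed homogeneous types.

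The main obstacle is bookkeeping rather than conceptual: one must select, for each of the seven forms, a single re-scaling monomial $\rho$ that simultaneously cancels all of the fractional exponents produced by inverting $\tilde{x}=x^{s_2}$, $\tilde{y}=y^{s_1}$. The only mildly delicate instance is (3c), where the chain rule gives $\dot{\tilde{x}}=3\tilde{x}^{2/3}\tilde{y}^{3/4}$ and $\dot{\tilde{y}}=4\tilde{x}^{2/3}\tilde{y}^{3/4}$, so $\rho$ must be the mixed monomial $\tilde{x}^{2/3}\tilde{y}^{3/4}$; here one must also confirm that the vanishing of $\rho$ along a coordinate axis affects only the time parametrisation and not the orbit configuration, so that the equivalence with the constant field $\mathcal{H}_{0}$ is legitimate on the open half-plane.
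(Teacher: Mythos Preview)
Your proposal is correct and follows essentially the same approach as the paper: apply the substitution \eqref{eqn-15} together with a monomial time re-scaling to each of the seven canonical forms (3a)--(3g), obtaining exactly the list $\tilde{(3a)}$--$\tilde{(3g)}$ the paper records, with (3d) landing in $\mathcal{H}_{2}$, (3c) in $\mathcal{H}_{0}$, and the rest in $\mathcal{H}_{1}$. Your explicit computations for (3c), (3d) and (3g) match the paper's, and your plan to verify the coefficient conditions case by case is in fact slightly more explicit than the paper, which simply asserts that each transformed system falls into one of the three families.
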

	
\begin{proof}
	Using the change \eqref{eqn-15} and some time re-scaling, some calculations show that the cubic quasi-homogeneous system in Lemma \ref{lemma-2} can be correspondingly transformed into a homogeneous one:
	
	$\tilde{(3a)}$: $\dot{x}= ax+by, \dot{y}=2(x+y) $, with $a\neq b,$ or $\dot{x}= ax\pm y, \dot{y}=2x $, by $dt=dt_1/y^{\frac{1}{2}}$, $y>0$,
	
	$\tilde{(3b)}$: $\dot{x}=2(x+y), \dot{y}=3ay$, with $a\neq 0$, by $dt=dt_1/x^{\frac{1}{2}}$, $x>0$,
	
	$\tilde{(3c)}$: $\dot{x}=3, \dot{y}=4$, by $dt=dt_1/x^{\frac{2}{3}}y^{\frac{3}{4}}$, $y>0$,
	
	$\tilde{(3d)}$: $\dot{x}=x(ay+x), \dot{y}=2y(bx+y) $, $(a,b)\neq (1,1)$, $y>0$,
	
	$\tilde{(3e)}$: $\dot{x}=2ax, \dot{y}=3(y\pm x)$, with $a\neq 0$, by $dt=dt_1/y^{\frac{2}{3}}$, $x>0$,
	
	$\tilde{(3f)}$: $\dot{x}=ax, \dot{y}=3(x+y)$, with $a\neq 0$, by $dt=dt_1/y^{\frac{2}{3}}$, $x>0$,
	
	$\tilde{(3g)}$: $\dot{x}=ax+y, \dot{y}=3y$, with $a\neq 0$, by $dt=dt_1/y^{\frac{2}{3}}$, $x>0$,
	where we still use $x, y$ replacing $\tilde{x}, \tilde{y}$ for simplifying notations. Note that each system is one of the three systems in the theorem. We complete the proof of the theorem.
\end{proof}

\noindent\textbf{Remark 3}\quad
	For the quasi-homogeneous systems which are symmetric with respect to the origin, we only research them on $x>0$ without loss of generality.

After transforming the quasi-homogeneous polynomial systems to homogeneous ones, we finally only need to characterize system $\mathcal{H}_2, \mathcal{H}_1, \mathcal{H}_0$ to get the global structure of the original systems.

\section{Global dynamics and phase portrait analysis}
First we study topological structures of the homogeneous system $\mathcal{H}_{2}$, which corresponds to the quasi-homogeneous systems $(3d)$ and homogeneous system $\tilde{(3d)}$, which is the most complicated system.
\begin{proposition}
	The global phase portraits of the homogeneous system $\tilde{(3d)}$ is topologically equivalent to one of the five phase portraits showed in Figure 2 and Figure 3 without taking into account the direction of the time.
\end{proposition}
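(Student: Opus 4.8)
The plan is to regard $\tilde{(3d)}$, that is $\dot x=x(x+ay),\ \dot y=2y(bx+y)$, as a quadratic homogeneous field of type $\mathcal H_2$ and to reconstruct its global portrait on the Poincaré disk from (i) the invariant lines through the origin, (ii) the sectorial structure at the origin, and (iii) the singularities at infinity. First I would form the polynomial $G(x,y)=xQ_2-yP_2$ of \eqref{eqn-8}; a short computation gives
\[ G(x,y)=xy\big[(2b-1)x+(2-a)y\big], \]
so by the invariant-line lemma the lines $x=0$, $y=0$ and $L:(2b-1)x+(2-a)y=0$ are invariant. Since $\tilde{(3d)}$ has even degree, its vector field satisfies $(P_2,Q_2)(-x,-y)=(P_2,Q_2)(x,y)$, hence the map $(x,y)\mapsto(-x,-y)$ is a symmetry up to time reversal and the portrait is centrally symmetric; as the statement ignores the direction of time, it suffices to analyse one half-plane and reflect.

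Next I would classify the characteristic directions. In the chart $y=ux$ the blow-up system \eqref{eqn-7} has singularities at the zeros of $G(1,u)=u\big[(2b-1)+(2-a)u\big]$, namely $u_0=0$ and (when $a\neq2$) $u_1=(1-2b)/(2-a)$, and the node/saddle classification lemma decides their type from the sign of $P_2(1,\cdot)G'(1,\cdot)$. With $G'(1,u)=(2b-1)+2(2-a)u$ one finds
\[ P_2(1,0)G'(1,0)=2b-1,\qquad P_2(1,u_1)G'(1,u_1)=\tfrac{2(1-ab)}{2-a}\,(1-2b), \]
so the decisive loci are $b=\tfrac12$, $a=2$ and $ab=1$; coprimality of $\mathcal H_2$ forces $ab\neq1$, and the excluded point $(a,b)=(1,1)$ lies on that curve. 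Because the chart $y=ux$ is singular on $x=0$, I would treat the direction $\theta=\pi/2$ (the $y$-axis) separately: determining the multiplicity of $\tilde G(\theta)$ at $\pi/2$ and invoking the tangency lemma, together with the normal-sector criteria, fixes whether infinitely many, exactly one, or no orbits are tangent to the $y$-axis at the origin.

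To close the portrait I would apply the Poincar\'e compactification \eqref{eqn-9}--\eqref{eqn-14}: substituting the two charts, I would solve $P^{*}(u,0)=Q^{*}(u,0)=0$ and classify the origin of \eqref{eqn-14}, obtaining the infinite singular points, which lie at the ends of the invariant lines $x=0$, $y=0$, $L$. Combining the hyperbolic and parabolic sectors around the origin with the types of the infinite singularities, and using the central symmetry to glue the two half-planes along the invariant $x$-axis, yields a complete global portrait for each admissible $(a,b)$.

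The decisive and hardest step is the parameter bookkeeping. The lines $a=2$ and $b=\tfrac12$ (on which $L$ degenerates onto the $y$-axis or the $x$-axis and a characteristic direction gains multiplicity, with the single point $a=2,\,b=\tfrac12$ giving $G\equiv0$ and a radial configuration) together with the excluded locus $ab=1$ partition the admissible $(a,b)$-plane into finitely many regions. On each region the node/saddle pattern at $u_0,u_1$, the $y$-axis tangency count, and the infinite types are constant, so the portrait is constant there. The obstacle is not any individual computation but proving that these algebraically distinct regions collapse---under topological equivalence and the permitted time reversal---to exactly the five portraits of Figures~2 and~3. I would carry this out by recording, for each region, the cyclic word of elliptic, hyperbolic and parabolic sectors at the origin and at infinity, and then checking that the different sign patterns produce only five distinct such words up to homeomorphism and reversal.
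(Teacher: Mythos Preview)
Your proposal is correct and follows essentially the same route as the paper: compute $G(x,y)=xy[(2b-1)x+(2-a)y]$, classify the characteristic directions at the origin via Lemmas~1 and~2, treat the equator via the Poincar\'e charts, and then carry out the sign bookkeeping along the loci $a=2$, $b=\tfrac12$, $ab=1$ (the paper packages the generic case with the three signs $A=2b-1$, $B=2(a-2)$, $C=2(1-ab)$) to arrive at the five portraits. The one extra observation the paper makes explicit is an origin/infinity duality---the Jacobians at a blow-up singularity $(0,u_0)$ and at the corresponding infinite singularity differ only in the sign of the $P_2(1,u_0)$ entry, so node/saddle at one forces saddle/node at the other---which spares the separate computation of the infinite types that you plan to do directly.
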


\begin{proof}
	
	We set \begin{equation}\label{eqn-16}
	\begin{aligned}
		\dot{x} &= x^2 + a_{12}xy := P_2(x,y), \\
		\dot{y} &= 2b_{12}xy + 2y^2 := Q_2(x,y),
	\end{aligned}
\end{equation}
	and $G_{2}(x,y)=xQ_{2}(x,y)-yP_{2}(x,y).$

	Taking  respectively the \text{Poincar\'{e}}
	 transformations $x=1/z,y=u/z$  and  $x=v/z,y=1/z$  together with the time re-scaling $dt_{1}=zdt$, system \eqref{eqn-16} around the equator of the \text{Poincar\'{e}}
	  sphere can be written respectively in 
	\begin{equation}
	\begin{aligned}
		\dot{u} &= (2 - a_{12})u^2 + (2b_{12} - 1)u, \\
		\dot{z} &= -z(1 + a_{12}u),
	\end{aligned}
\end{equation}

	and
	\begin{equation}
	\begin{aligned}
		\dot{v} &= (a_{12} - 2)v + (1 - 2b_{12})v^{2}, \\
		\dot{z} &= -2z(1 + b_{12}v).
	\end{aligned}
\end{equation}

	Therefore, there exists a singularity $I_1$ located at the end of the $y$-axis. From Lemma 1 we know that $I_1$ is a saddle if $a_{12}>2$ or  a node if $a_{12}<2$. When $a_{12}=2$ and $b_{12}\neq \frac{1}{2}$, $I_1$ is a saddle-node. Notice that when $a_{12}=2$, $b_{12}=\frac{1}{2}$, $P_2(x,y)$ and $Q_2(x,y)$ have the common factor $x+2y$, which is not under our discussion. Thus we have done the classification of the singularity $I_1$. There also exists a singularity $I_0$ at the end of the $x$-axis. In our following discussion, we can see that the trajectory structure near the singularity $I_0$ is related to the category of the singularity $(0,0)$. 
	
	Consider the blow-up at the origin, if $u_0$ is a zero of $G_2(1,u)$, which means $E_1(0,u_0)$ is a singularity. Moreover, $u_0$ also a singularity for system (17), which means there's also a singularity $E_2$ located at the end of the $y=u_0x$ for system (16). The Jacobian matrix at the two points $E_1$ and $E_2$ are as follows:
	$$J(E_1)=\begin{pmatrix}P_2(1,u_0)&0\\0&G^{\prime}(1,u_0)\end{pmatrix},\quad J(E_2)=\begin{pmatrix}-P_2(1,u_0)&0\\0&G^{\prime}(1,u_0)\end{pmatrix}.$$
	
    Then from Lemma 1 we know that if $E_2$ is a saddle (node, saddle-node), then $E_1$ is a node (saddle, saddle-node). Actually from the form of the \text{Poincar\'{e}} transformation we can know that it's almost a blow-up at the infinity, with $x=1/z$ to have a closer vision of infinity. Thus it's natural to have such relationship when the system doesn't have other singularities.

	Having the above preparation, we now study the global phase portraits of system $\tilde{(3d)}$ according to the number of zeros of $\widehat{G}_{2}(u):=G_{2}(1,u)=u(2b_{12}-1+(2-a_{12})u)$, which can have either one or two real roots. The direction of the orbits along the separatrix can be determined from the equation. From the equation we also notice that $x=0$ is an invariant line of the system. Then we have the following discussions.

	(a) $a_{12}\neq 2, b_{12}\neq \frac{1}{2}$: Then $\widehat{G}_{2}(u)$ has two zeros with multiplicity 1. Thus from Lemma 1, $u_0=0$ means $(0,0)$ is a node when $2b_{12}-1>0$ or a saddle when $2b_{12}-1<0$, $u_1=\frac{1-2b_{12}}{2-a_{12}}$ means $(u_1,0)$ is a node when $\frac{a_{12}b_{12}-1}{a_{12}-2}>0$ or a saddle when $\frac{a_{12}b_{12}-1}{a_{12}-2}<0$. As $\frac{\pi}{2}$  is a zero of multiplicity 1 of $\tilde{G}(\theta)$, there exists infinitely many orbits connecting the origin of system $\tilde{(3d)}$ and being tangent to the $y$–axis at the origin when $a_{12}>2$ and exactly one orbit when $a_{12}<2$. And for the zero point $(u,0)$, the singularity located at the end of $y=ux$ can be determined too from our previous discussion. In summary, the topological phase portraits of system $\tilde{(3d)}$ is determined by the sign of $A:=2b_{12}-1$, $B:=2(a_{12}-2)$ and $C:=2(1-a_{12}b_{12})$. We obtain the 3 possible topological global phase portraits shown in Figure 2. When two of $A,B,C$ are negative, one is positive, we have $(A)$. When two of $A,B,C$ are positive, one is negative, we have $(B)$. When $A,B,C$ are all negative, we have $(C)$. Obviously $A,B,C$ cannot be all positive. Thus we have finished the classification.  
	
	(b) $a_{12}\neq 2, b_{12}= \frac{1}{2}$: Then $u=0$ is a zero of multiplicity 2 so that $(0,0)$ is a saddle-node. As $\frac{\pi}{2}$  is a zero of multiplicity 1 of $\tilde{G}(\theta)$, there exists infinitely many orbits connecting the origin of system $\tilde{(3d)}$ and being tangent to the $y$–axis at the origin when $a_{12}>2$ or exactly one orbit when $a_{12}<2$. Considering the relationship between the category of the singularity (0,0) and $I_0$, we can know that $I_0$ is also a saddle-node. We obtain the global phase portraits showed in Figure 3.  Figure 3 $(A)$ is the case $a_{12}>2$, and  Figure 3 $(B)$ is the case $a_{12}<2$
	
	(c) $a_{12}=2, b_{12}\neq \frac{1}{2}$: When $b_{12}> \frac{1}{2}$, We can determine that $I_0$ is a saddle and $I_1$ is a saddle-node. When $b_{12}<\frac{1}{2}$, we can determine that $I_0$ is a saddle-node and $I_1$ is a saddle. Similarly, the phase portraits near the origin can be obtained too. The global phase portraits is the same as the two in Figure 3 topologically.

	\begin{figure}
		\centering
		\begin{subfigure}[b]{0.3\textwidth}
			\centering
			\includegraphics[width=\textwidth]{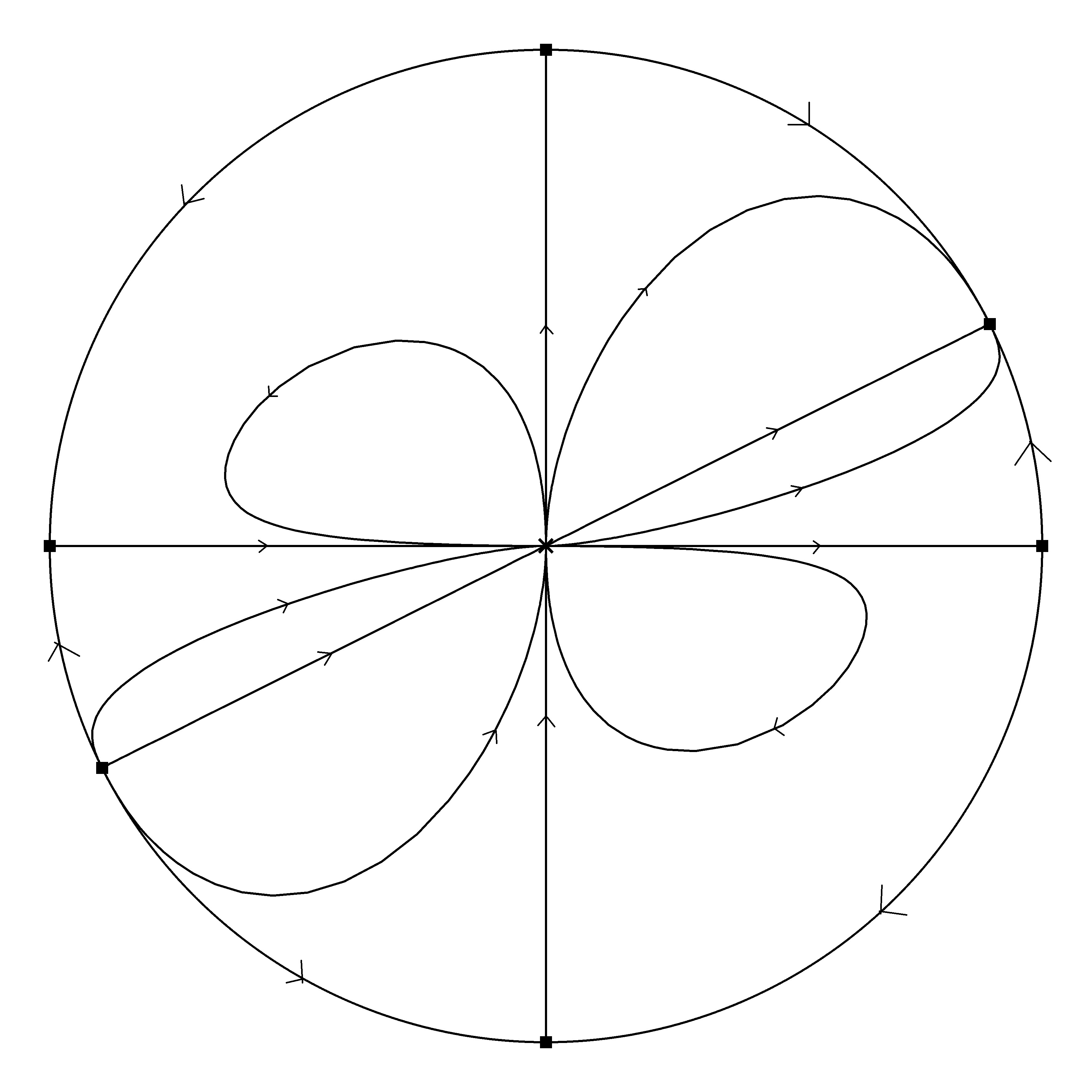}
			\caption{}
		\end{subfigure}
		\hfill
		\begin{subfigure}[b]{0.3\textwidth}
			\centering
			\includegraphics[width=\textwidth]{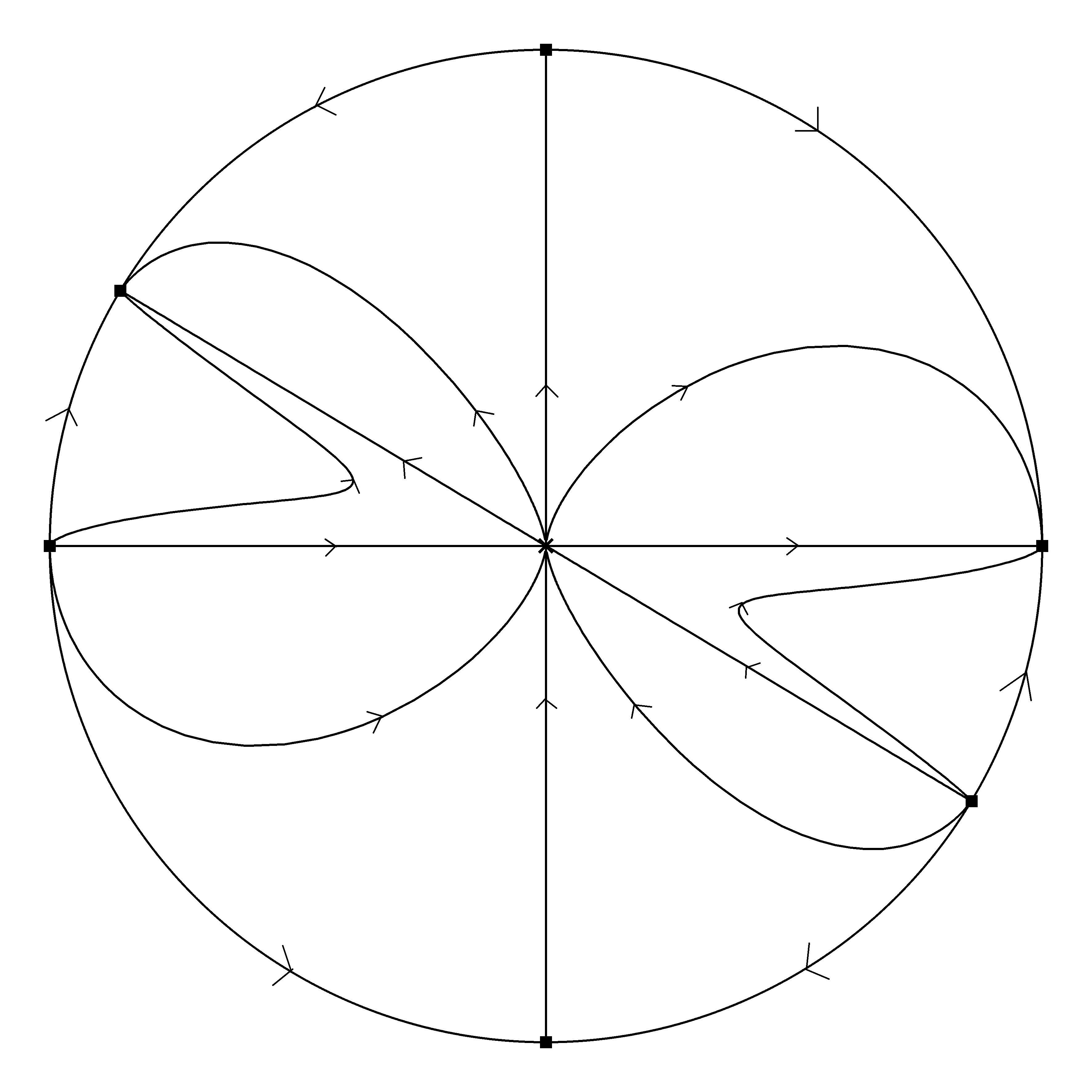}
			\caption{}
		\end{subfigure}
		\hfill
		\begin{subfigure}[b]{0.3\textwidth}
			\centering
			\includegraphics[width=\textwidth]{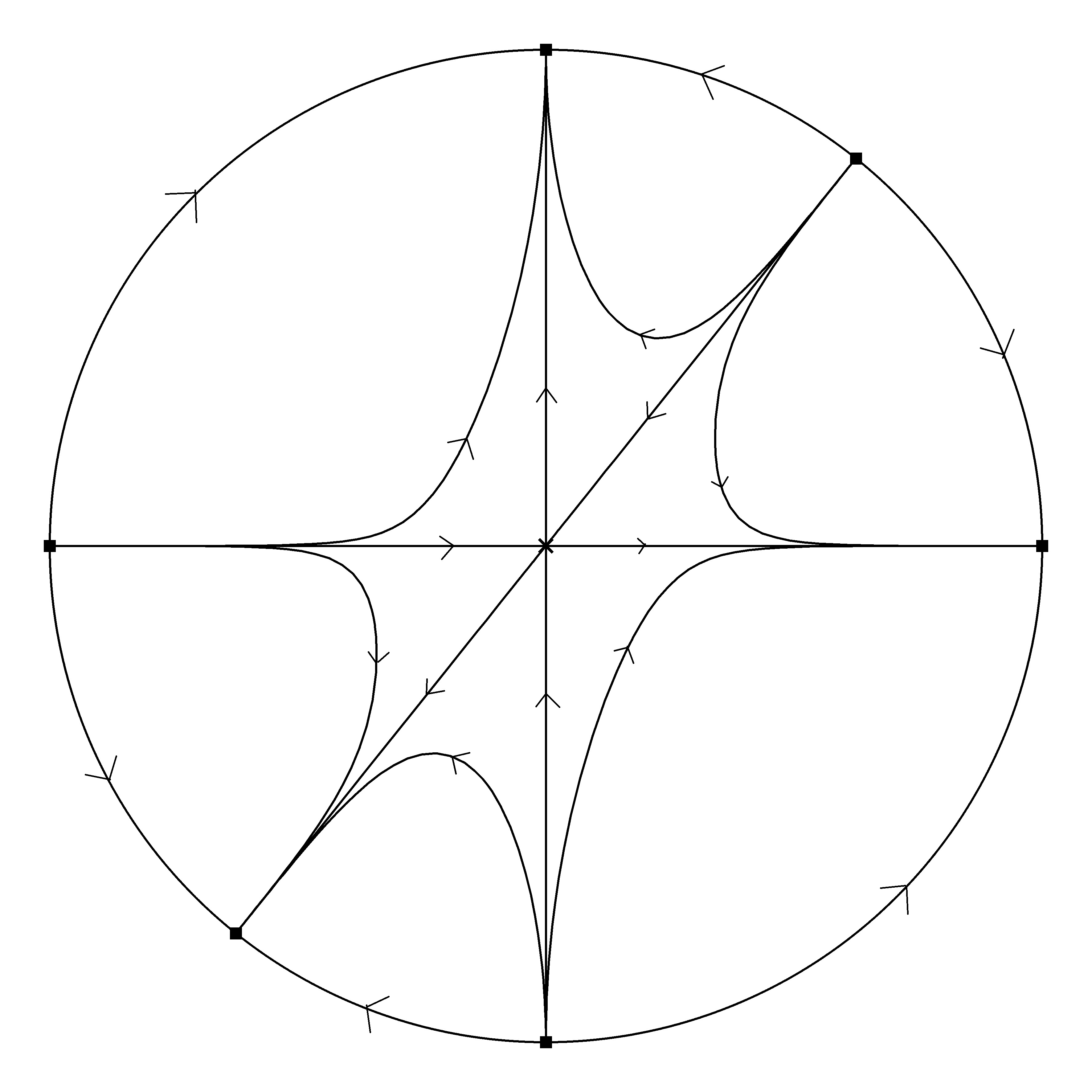}
			\caption{}
		\end{subfigure}
		\caption{Portraits of $\tilde{(3d)}$ when $\widehat{G}_{2}(u)$ has two different zeros}
		\label{fig:three_images}
	\end{figure}
	
	\begin{figure}[H]
	\centering
	\begin{subfigure}[b]{0.65\textwidth} 
		\centering
		\begin{subfigure}[b]{0.48\textwidth}
			\centering
			\includegraphics[width=\textwidth]{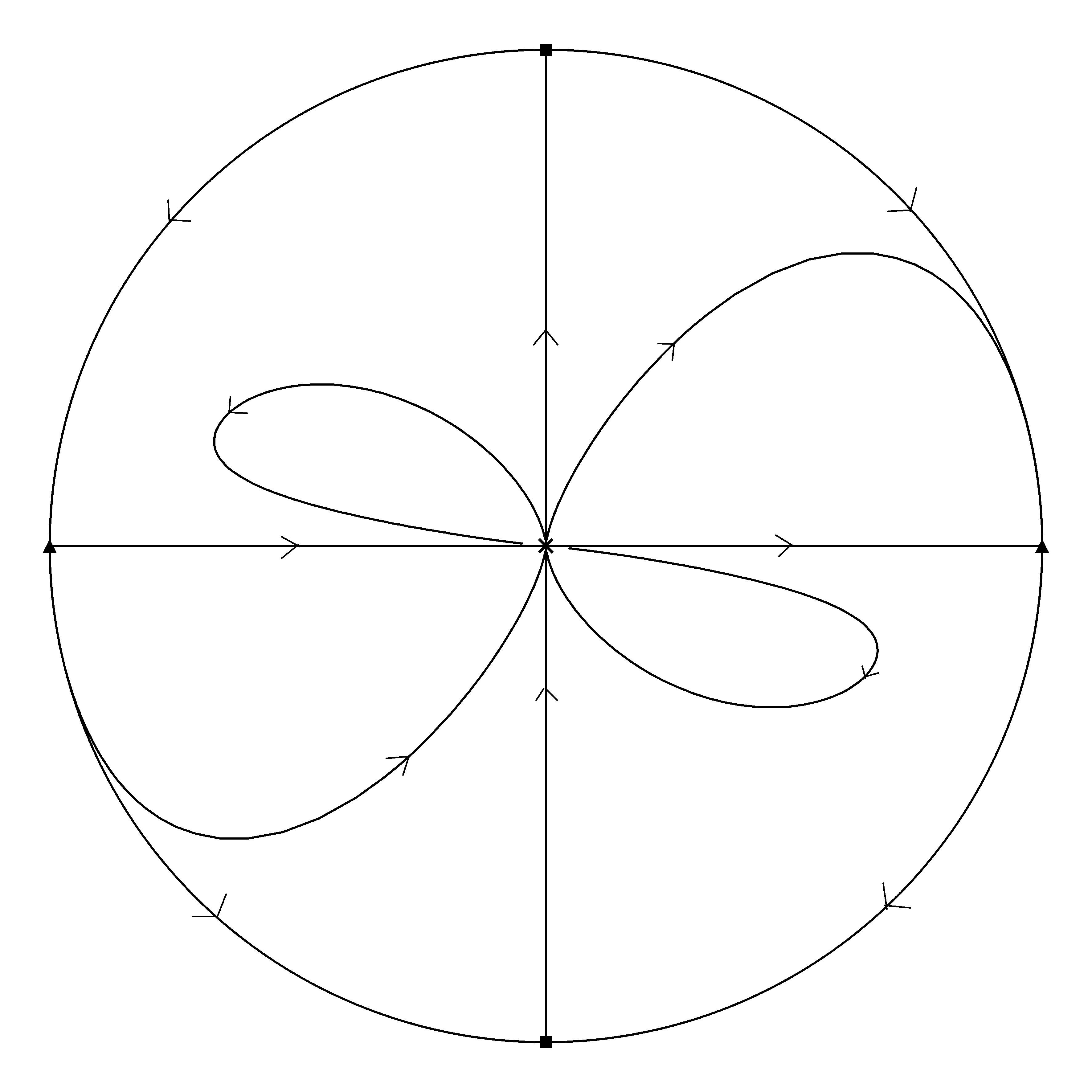}
			\caption{}
		\end{subfigure}
		\hfill
		\begin{subfigure}[b]{0.48\textwidth}
			\centering
			\includegraphics[width=\textwidth]{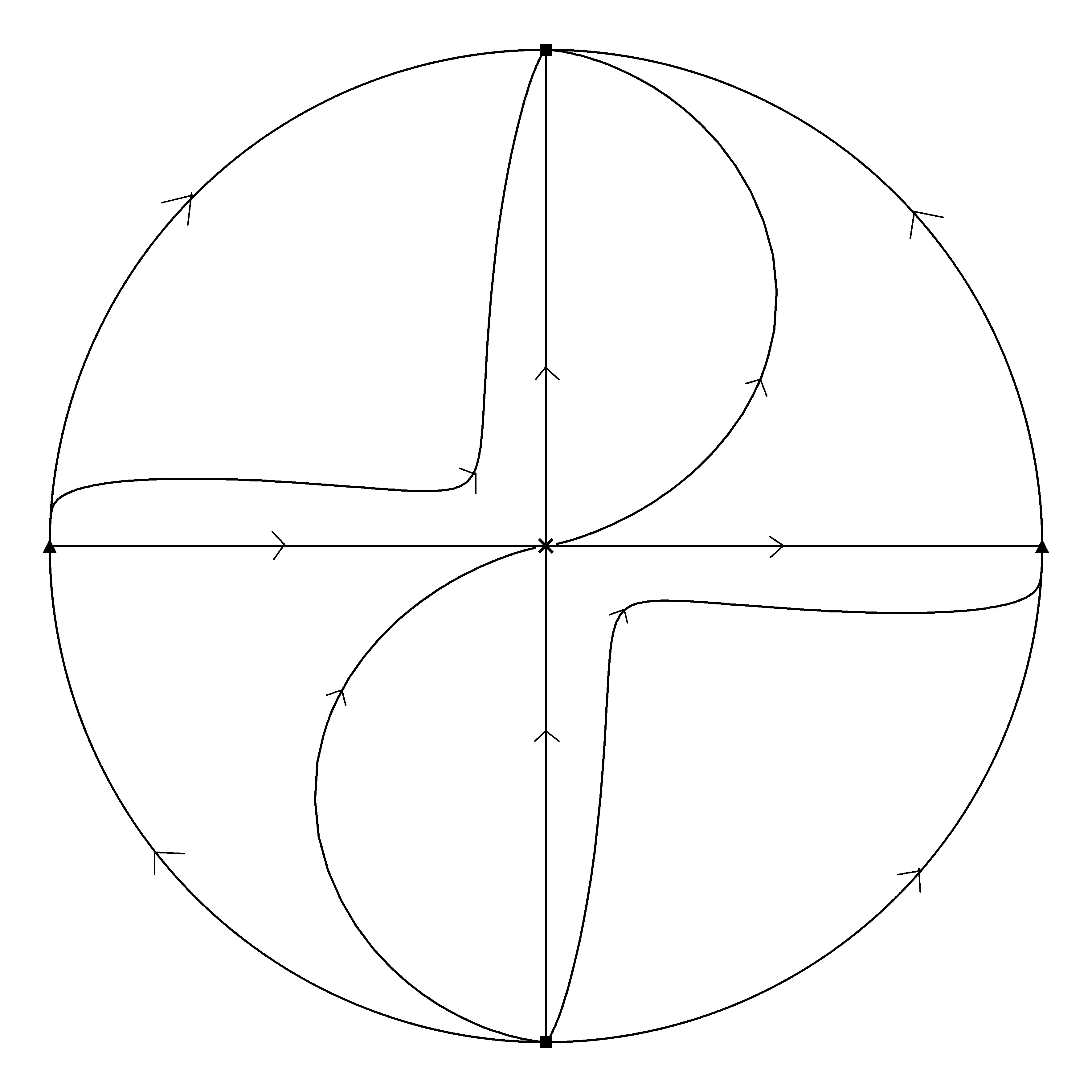}
			\caption{}
		\end{subfigure}
	\end{subfigure}
	
	\caption{Portraits of $\tilde{(3d)}$ when $\widehat{G}_{2}(u)$ has one zero of multiplicity 2}
	\label{fig:two_images}
\end{figure}

\end{proof}
\noindent\textbf{Remark 4}\quad 
	Here we don't analyze the origin directly just because when $u=0$ is a singularity, the structure after blow-down is not obvious to get. However, if we analyze the infinity directly, we can see the structure clearly in this example. Moreover, from the form of the system $\tilde{(3d)}$, we can see that the value of $a$ and $b$ shows symmetry by exchange $x$ and $y$. That's why the phase portraits are topologically the same. Moreover, it's easy to simplify that a blow-up in $y-$direction makes condition (c) almost the same as condition (b), only with the difference of parameters. Thus topologically they have the same global structure. 

	Now we consider the global structure of the original quasi-homogeneous system $(3d)$. 
\begin{theorem}
	The global phase portraits of the quasi-homogeneous system $(3d)$ is topologically equivalent to one of the eight phase portraits showed in Figure 4 and Figure 5 without taking into account the direction of the time.
\end{theorem}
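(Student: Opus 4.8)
The plan is to obtain the global portrait of $(3d)$ from that of its associated homogeneous system $\tilde{(3d)}$, already classified in the Proposition, by combining the change of variables \eqref{eqn-15} with the symmetry of $(3d)$. For the weight vector $(s_1,s_2,d)=(2,1,3)$ the transformation \eqref{eqn-15} is $\Phi(x,y)=(x,y^{2})$, and a direct computation shows that $\Phi$ carries $(3d)$ restricted to $\{y>0\}$ onto $\tilde{(3d)}$ restricted to $\{\tilde y>0\}$ with no time rescaling. Since $\Phi$ is a diffeomorphism from $\{y>0\}$ onto $\{\tilde y>0\}$, the portrait of $(3d)$ on the open upper half plane is a topological copy of the portrait of $\tilde{(3d)}$ on $\{\tilde y>0\}$ read off from the Proposition. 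In particular the invariant line $\tilde x=0$ pulls back to the invariant $y$-axis, the invariant line $\tilde y=u_1\tilde x$ (when it exists, $u_1=\frac{1-2b}{2-a}$) pulls back to an invariant parabola $y^{2}=u_1x$, and the number of orbits tangent to the $y$-axis at the origin is inherited from the tangency lemma applied to $\tilde{(3d)}$.

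Next I would reconstruct the full finite plane. By the symmetry lemma, $(3d)$ is invariant under $(x,y)\mapsto(x,-y)$ (here $s_1=2$ is even and $s_2=1$ is odd) with no reversal of time, so the portrait on $\{y<0\}$ is the mirror image across the $x$-axis of the portrait on $\{y>0\}$; the two halves are glued along the invariant $x$-axis, on which $\dot x=x^{2}>0$, so the flow always points toward increasing $x$ and the origin is the only singularity on it. Recalling that the origin is the unique finite singularity of $(3d)$, this fixes the finite portrait once the local structure at the origin is pinned down. This last point needs care because $\Phi$ is singular along $y=0$ (it is two-to-one in the $y$-direction), so the sectors and tangencies at the origin of $(3d)$ must be assembled from the upper and lower transported pictures together with the $x$-axis flow, rather than copied verbatim from $\tilde{(3d)}$.

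Finally I would treat infinity by the Poincar\'e compactification of $(3d)$ itself, using \eqref{eqn-10} and \eqref{eqn-12}: the ends of the $y$-axis are elementary singularities, a saddle when $a>1$ and a node when $a<1$, while the ends of the $x$-axis are highly degenerate (their linear part vanishes identically) and must be resolved by blow-up as in Section~\ref{s2}. By the symmetry it suffices to analyse the $y>0$ part of the equator together with the two $x$-ends. I would then partition the parameter plane by the thresholds $a=2$, $b=\frac12$, $ab=1$ governing the origin and the points $I_0,I_1$ of $\tilde{(3d)}$, refined by the extra threshold $a=1$ coming from the type of the $y$-ends, and for each region assemble the global portrait, proving that topologically equivalent assemblies occur exactly within each region and that the total number of distinct types is precisely eight, namely those of Figures 4 and 5.

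I expect the main obstacle to be the analysis at infinity: the naive transport through $\Phi$ is useless there, since $\Phi$ collapses the entire upper arc of the equator of $(3d)$ onto the single point $I_1$ of $\tilde{(3d)}$, so the behaviour at infinity must be obtained independently, and in particular the degenerate $x$-ends require a (quasi-homogeneous) blow-up whose blow-down back to $(3d)$ is delicate, exactly the difficulty flagged in Remark 4. The secondary difficulty is the bookkeeping that turns the five homogeneous portraits into eight quasi-homogeneous ones: I must show that the sign datum $\mathrm{sign}(a-1)$, invisible in the homogeneous classification, genuinely splits some cases while the reflection symmetry identifies others, so that the final count is exactly eight.
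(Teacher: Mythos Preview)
Your overall strategy---transport the half-plane dynamics through $\Phi(x,y)=(x,y^{2})$, reflect by the symmetry, and analyse infinity directly on $(3d)$ via Poincar\'e compactification---is exactly the paper's approach, and your identification of the symmetry $(x,y)\mapsto(x,-y)$ is correct (the paper's text says ``symmetric about $x=0$'', but Lemma~7 with $(s_1,s_2)=(2,1)$ gives reflection in the $x$-axis, as you write).

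The one place your plan diverges from what actually happens is the accounting for the number eight. You expect $\operatorname{sign}(a-1)$ to split some of the five homogeneous cases. That is not the mechanism. When you carry out the Poincar\'e compactification of $(3d)$ you will obtain, in the two charts,
\[
\dot u=(1-a)u^{3}+(b-1)uz,\quad \dot z=-z(z+au^{2}),
\]
\[
\dot v=(a-1)v+(1-b)v^{2}z,\quad \dot z=-z(1+bvz).
\]
On $z=0$ these reduce to $\dot u=(1-a)u^{3}$ and $\dot v=(a-1)v$; hence for $a=1$ the \emph{entire equator} consists of singularities, not merely a change of type at the $y$-ends. This is the source of Figure~5: at $a=1$ one gets a parallel family of three portraits (the analogues of (A), (B), (C) with the equator degenerate), while for every $a\neq1$ the five homogeneous portraits of Figure~2 and Figure~3 yield exactly the five portraits of Figure~4. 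So the count is $8=5+3$, stratified by $a=1$ versus $a\neq1$, rather than by the sign of $a-1$. In particular, you need not resolve the degenerate $x$-ends by an independent blow-up as you feared: once you know the half-plane picture from $\tilde{(3d)}$, the symmetry, and whether the equator is a line of singularities, the global portrait is determined.
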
  
\begin{proof}
    Taking  respectively the \text{Poincar\'{e}}
	 transformations $x=1/z,y=u/z$  and  $x=v/z,y=1/z$  together with the time rescaling $dt_{1}=z^3dt$, system $(3d)$ around the equator of the \text{Poincar\'{e}}
	  sphere can be written respectively in 
	\begin{equation}
	\begin{aligned}
		\dot{u} &= (1 - a_{12})u^3 + (b_{12} - 1)u z, \\
		\dot{z} &= -z(z + a_{12} u^2),
	\end{aligned}
\end{equation}

	and
	\begin{equation}
	\begin{aligned}
		\dot{v} &= (a_{12} - 1)v + (1 - b_{12})v^{2} z, \\
		\dot{z} &= -z(1 + b_{12} v z).
	\end{aligned}
\end{equation}

    From the new systems  we know that when $a_{12}=1$, the infinity of system $(3d)$ is filled up with singularities. And system $(3d)$ has a singularity $I_0$ located at the end of the $x-$axis and $I_1$ located at the end of the $y-$axis when $a_{12}\neq 1$. 
	Through the change of variables between the quasi-homogeneous differential systems and their associated homogeneous ones we get that the invariant line $y=u_0x$ of system $\mathcal{H}_{2}$ as $u_0\neq 0$ corresponds to an invariant curve of system $\tilde{(3d)}$, which is tangent to the $y$-axis at the origin, and connects the origin and the singularity at infinity located at the end of $x$-axis.
	
	We also need to notice the following remarks. First, the transformation \eqref{eqn-15} is homeomorphism in  plane $y>0$, which doesn't change the global topological structure. Second, the separatrix will still connect the origin and the infinity singularity but not a straight line. Recall that it is also a reason why we want to transform the quasi-homogeneous system to homogeneous ones. Actually it can be calculated precisely through the variable changes, or we can use Taylor approximation of the stable and unstable manifolds. Third, the system is symmetric about $x=0$. Thus, we can get the following 5 kinds of portraits in Figure 4 when $a_{12}\neq 1$. Similar to this case, when $a_{12}=1$, there are 3 global portraits almost the same as (A), (B), (C) in Figure 5, which are different from those only now the infinity fulfills singularities instead of some invariant lines. We give them in Figure 4.

		\begin{figure}[H]
	\centering
	\begin{subfigure}[b]{0.3\textwidth}
		\centering
		\includegraphics[width=\textwidth]{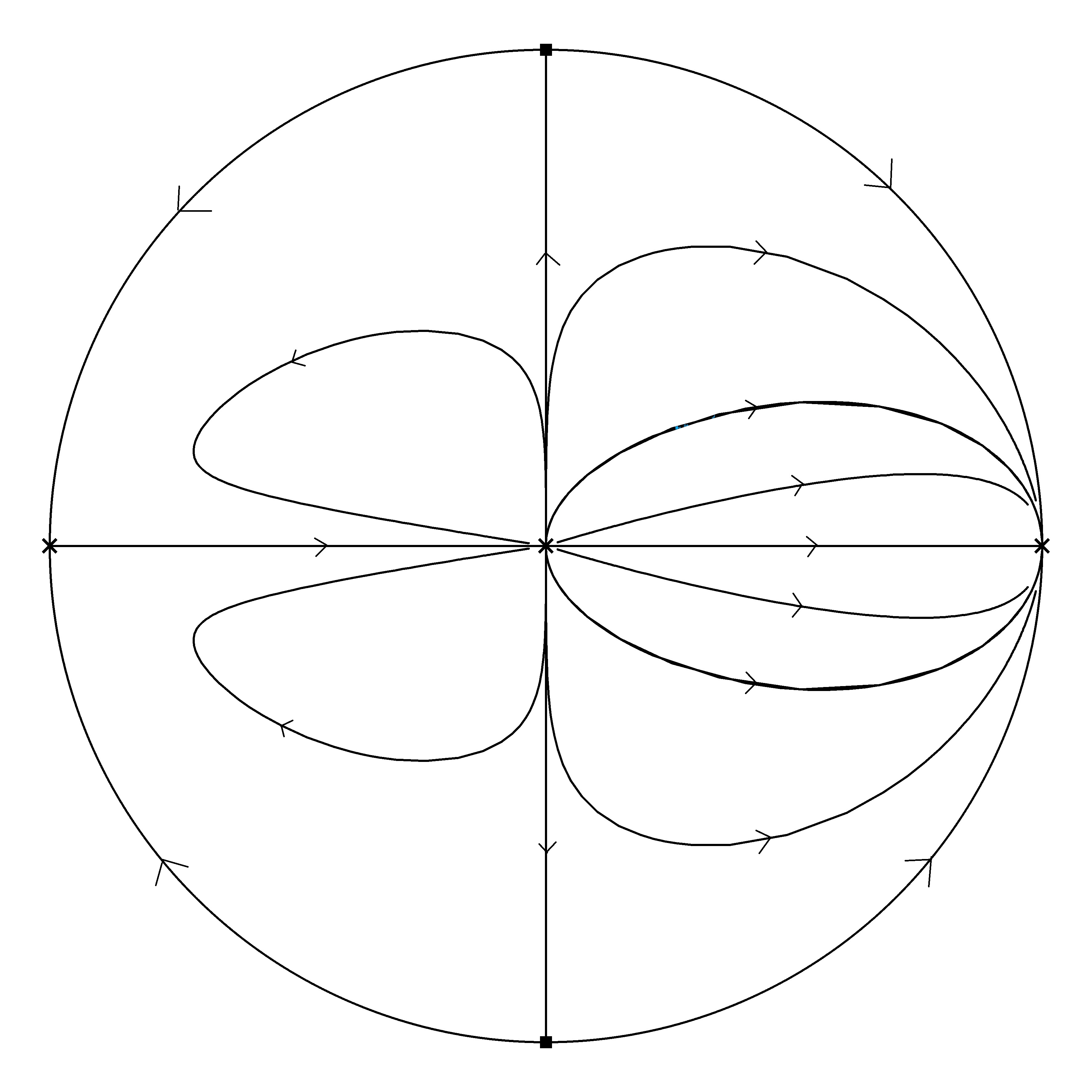}
		\caption{}
	\end{subfigure}
	\hfill
	\begin{subfigure}[b]{0.3\textwidth}
		\centering
		\includegraphics[width=\textwidth]{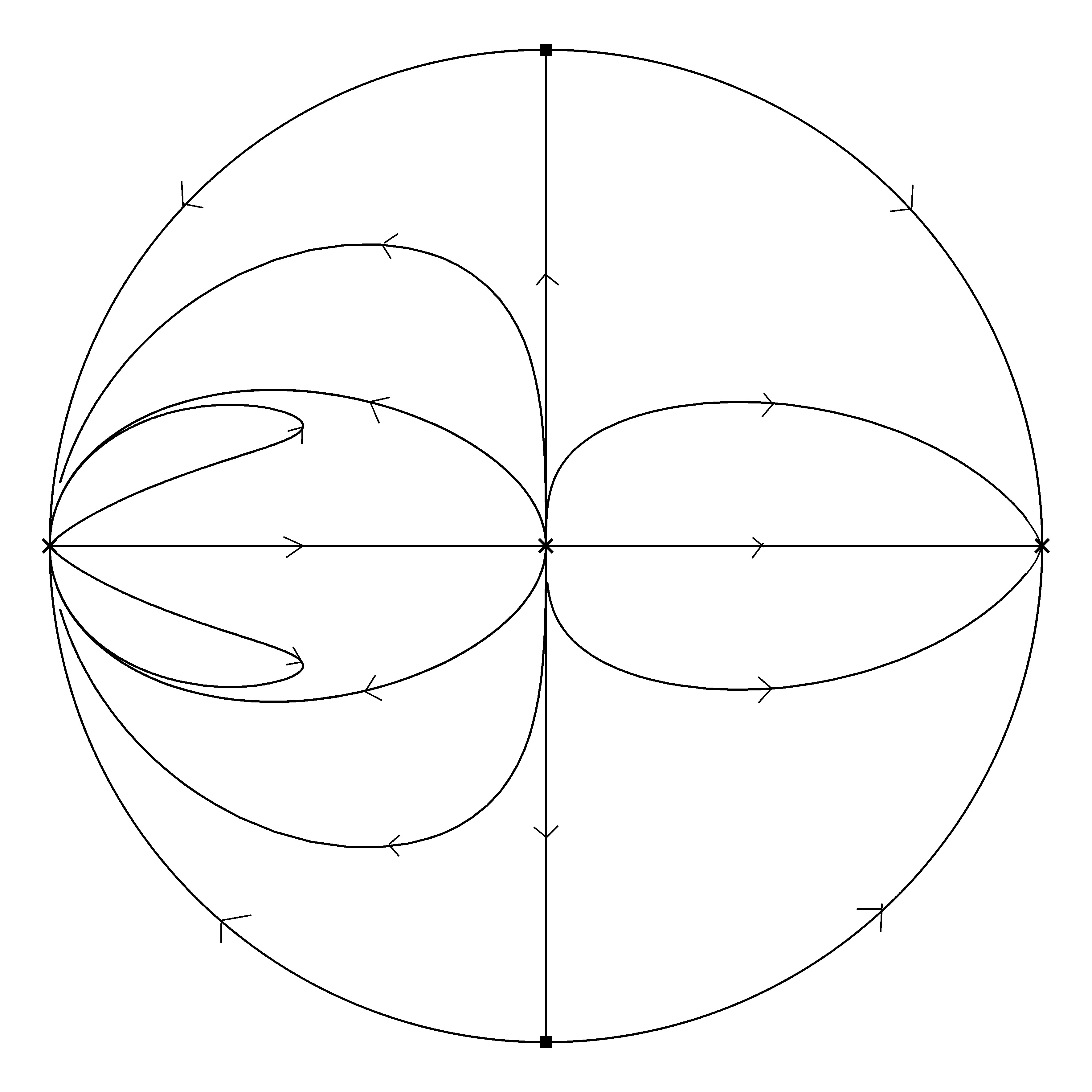}
		\caption{}
	\end{subfigure}
	\hfill
	\begin{subfigure}[b]{0.3\textwidth}
		\centering
		\includegraphics[width=\textwidth]{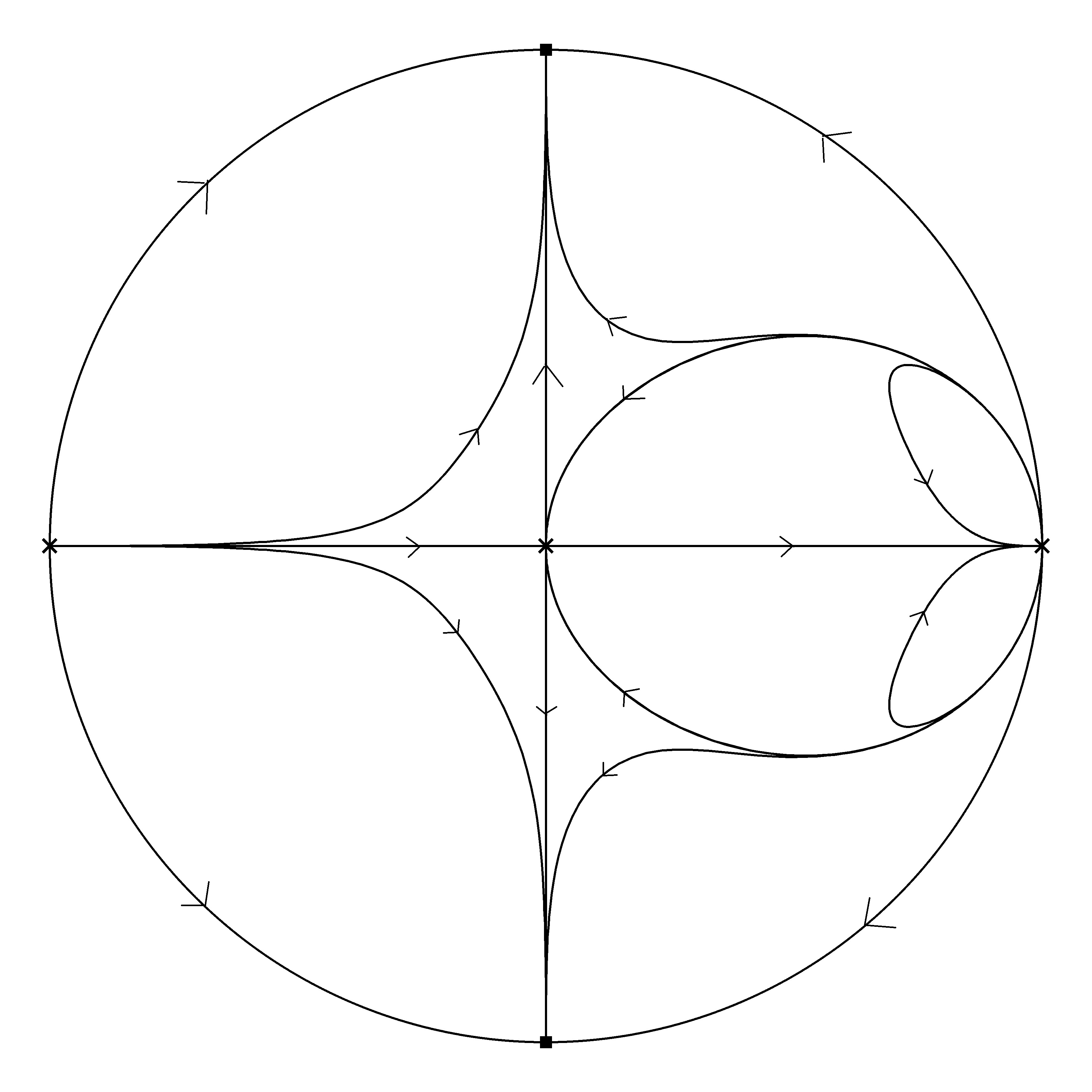}
		\caption{}
	\end{subfigure}

	\vspace{0.5cm} 

	\begin{subfigure}[b]{0.65\textwidth} 
		\centering
		\begin{subfigure}[b]{0.48\textwidth}
			\centering
			\includegraphics[width=\textwidth]{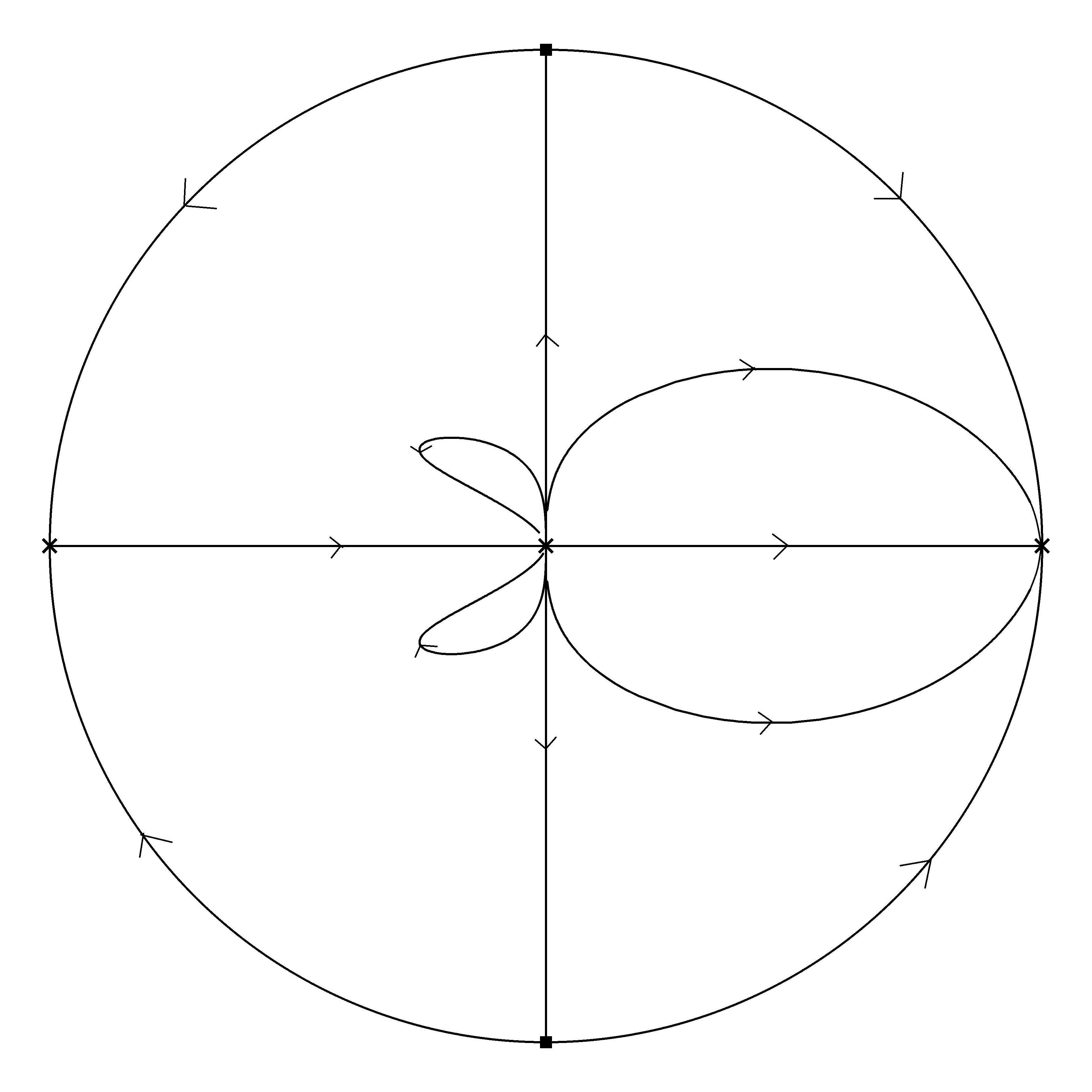}
			\caption{}
		\end{subfigure}
		\hfill
		\begin{subfigure}[b]{0.48\textwidth}
			\centering
			\includegraphics[width=\textwidth]{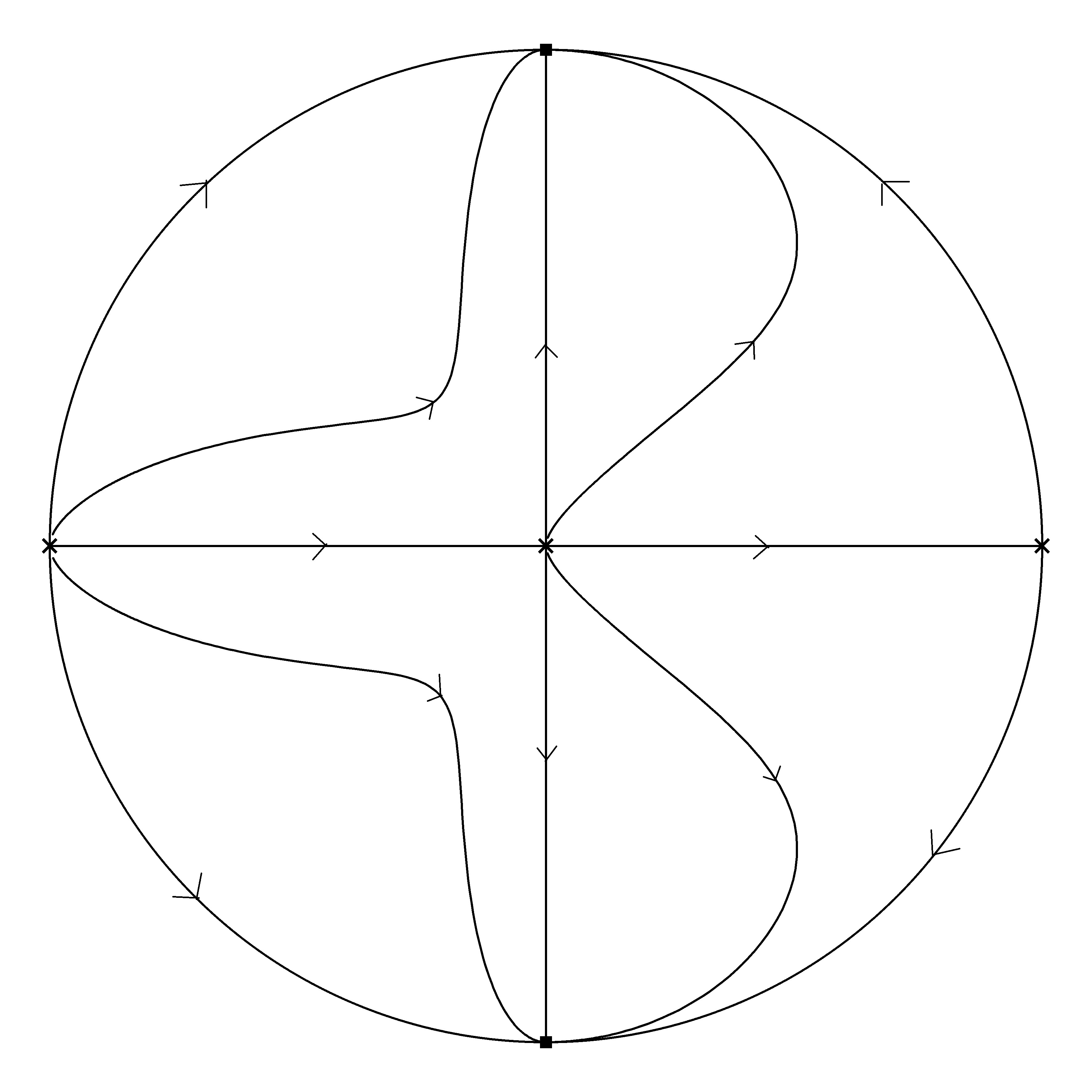}
			\caption{}
		\end{subfigure}
	\end{subfigure}

	\caption{Portraits of $(3d)$ when the infinity has 2 singularities}
	\label{fig:3top_2bottom}
\end{figure}

	\begin{figure}[H]
		\centering
		\begin{subfigure}[b]{0.3\textwidth}
			\centering
			\includegraphics[width=\textwidth]{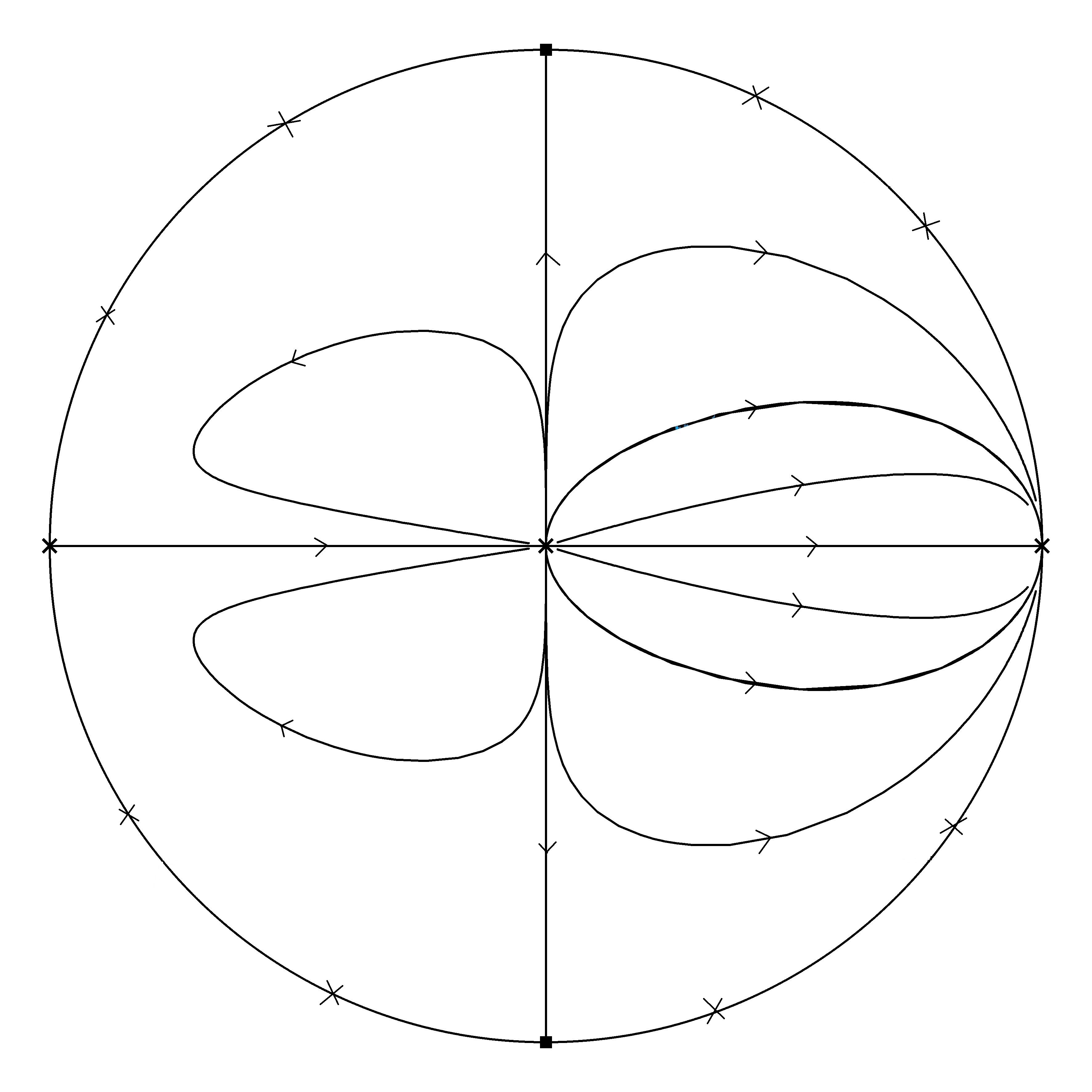}
			\caption{}
		\end{subfigure}
		\hfill
		\begin{subfigure}[b]{0.3\textwidth}
			\centering
			\includegraphics[width=\textwidth]{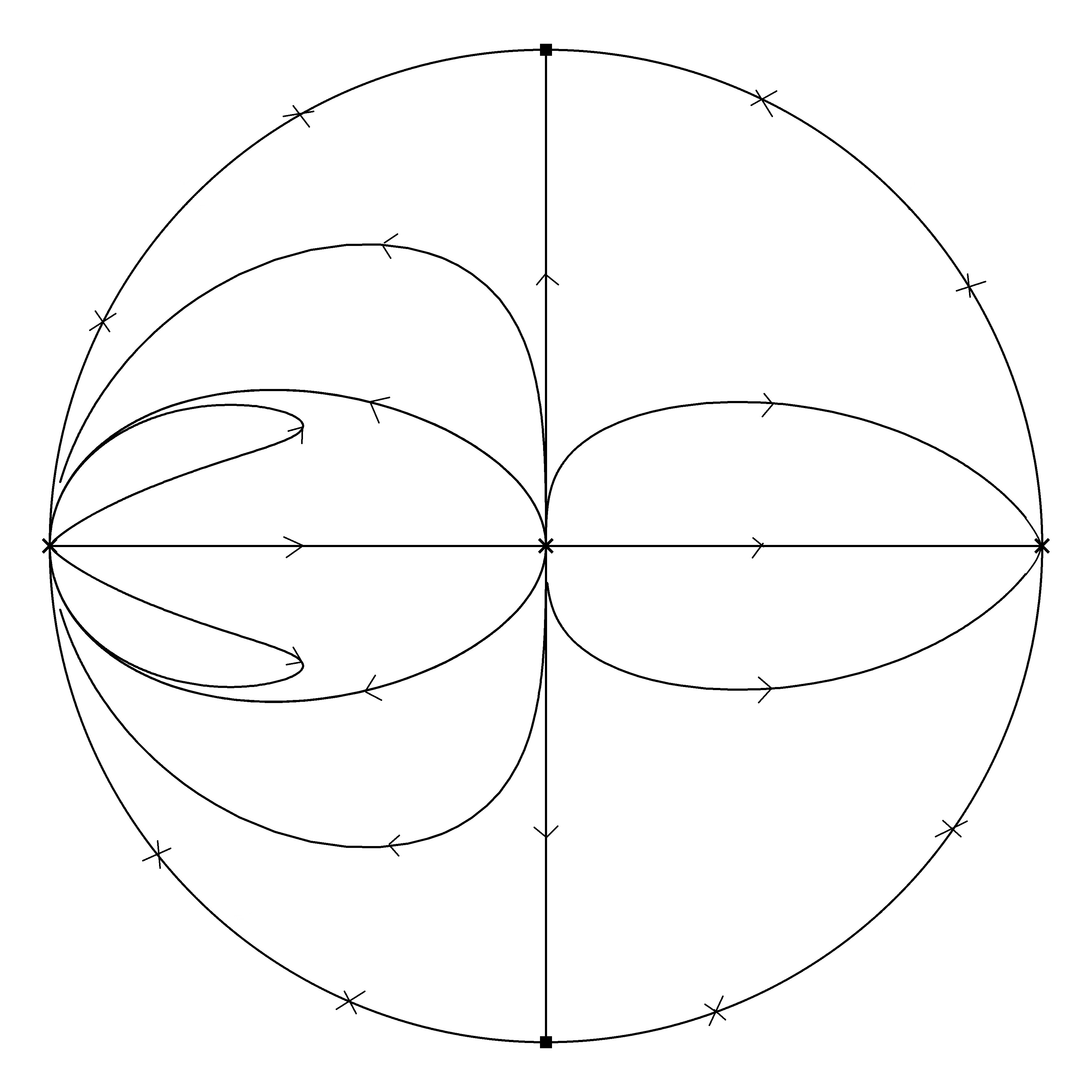}
			\caption{}
		\end{subfigure}
		\hfill
		\begin{subfigure}[b]{0.3\textwidth}
			\centering
			\includegraphics[width=\textwidth]{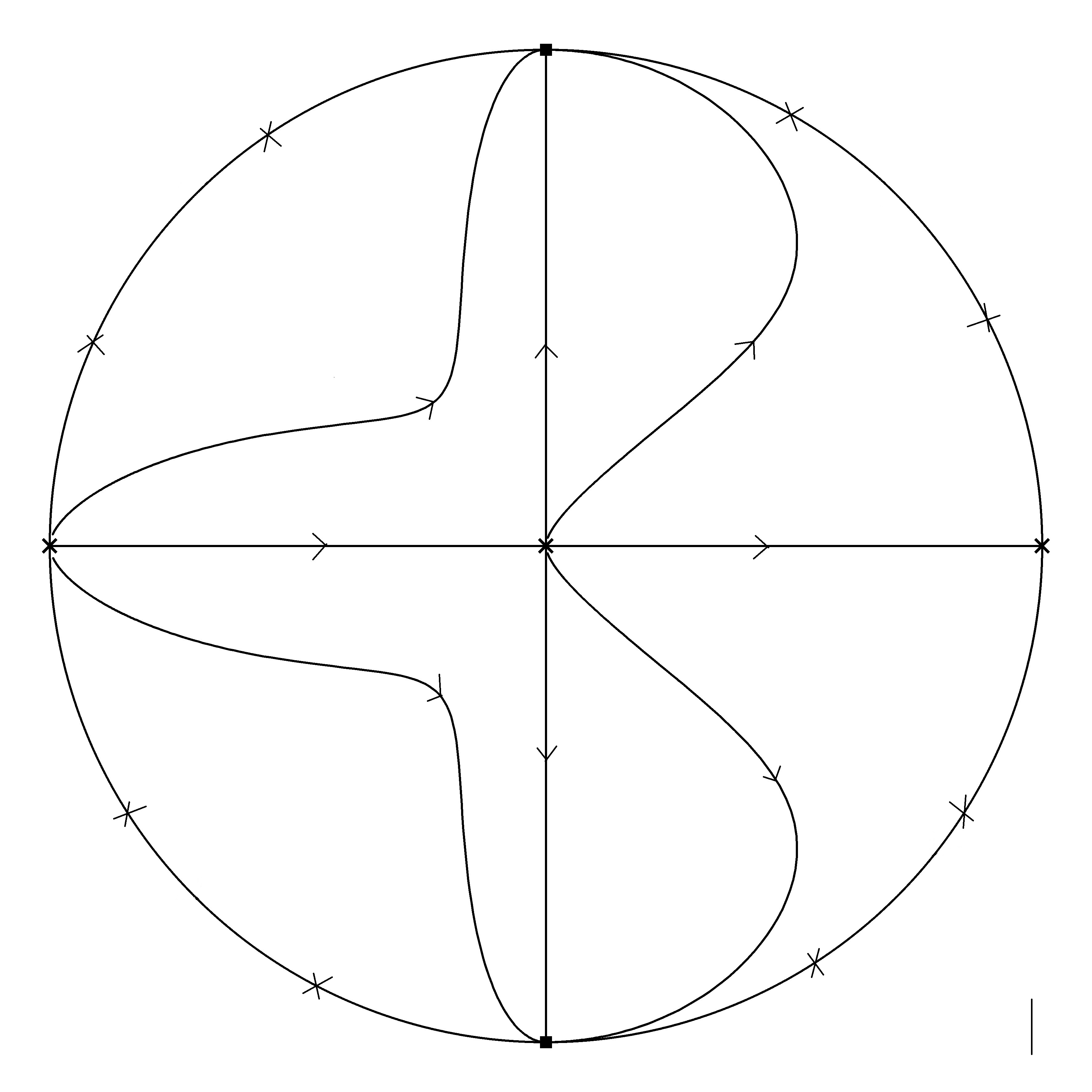}
			\caption{}
		\end{subfigure}
		\caption{Portraits of $(3d)$ when the infinity fulfills singularities}
		\label{fig:three_in_one_row}
	\end{figure}

\end{proof}
\noindent\textbf{Remark 5}\quad 
	Although certain phase portraits of quasi-homogeneous polynomial systems are topologically equivalent to those of homogeneous systems, we deliberately distinguish them in this study. This distinction is made to provide a more precise depiction of the actual global structures that arise under specific parameter conditions. It is important to note that, in the sense of topological equivalence, different types of singularities such as nodes and focus may be classified similarly, even though their geometric behaviors are different. In our analysis, the phase portraits are not only classified topologically but also derived from explicit parameter values, allowing us to simulate the systems and present their concrete geometric configurations. This approach highlights the subtle but meaningful differences that can occur within the same topological class.

For $\mathcal{H}_{1}$, it is a linear system. From the form of the system we can know that the origin can be a saddle, node, focus or a center. Here we omitted the parameter conditions and phase portraits of the homogeneous systems since it's trivial. We directly give the phase portraits of the quasi-homogeneous systems in Figure 6 without taking into account the direction of the time. Similarly, considering the symmetry and the topologically equivalence, we can always consider the condition where separatrixes can only be $x-$axis and $y-$axis for the case of the saddle. For the case of the focus, the original quasi-homogeneous polynomial system actually consists of closed tracks, which is topologically equivalent to the case of center. Hence we will give four conditions in Figure 6, where the origin is separately a saddle, node, focus or a center for the corresponding homogeneous polynomial systems.
\begin{figure}[H]
    \centering
    \begin{subfigure}[b]{0.65\textwidth}
        \centering
        \begin{subfigure}[b]{0.48\textwidth}
            \centering
            \includegraphics[width=\textwidth]{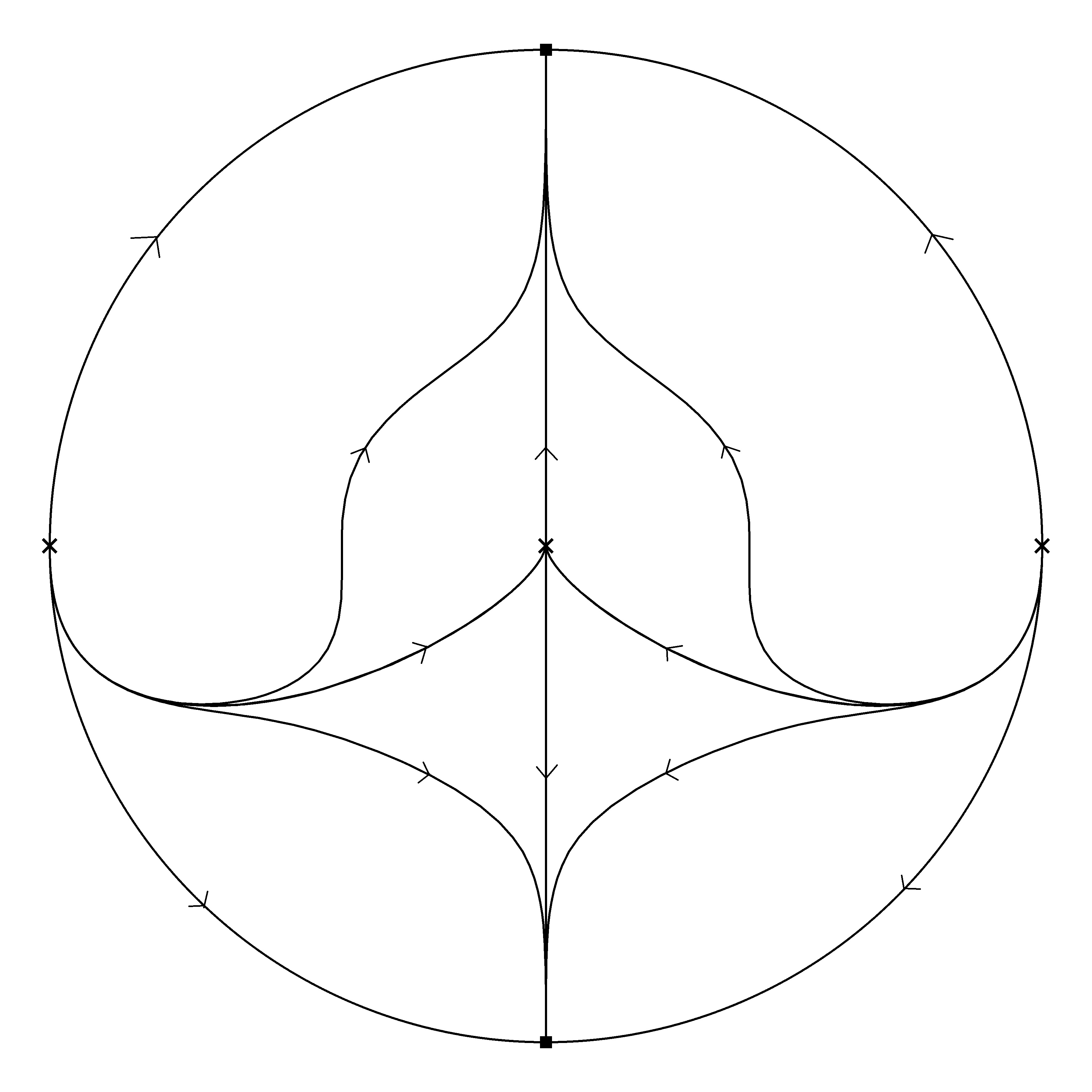}
            \caption{}
        \end{subfigure}
        \hfill
        \begin{subfigure}[b]{0.48\textwidth}
            \centering
            \includegraphics[width=\textwidth]{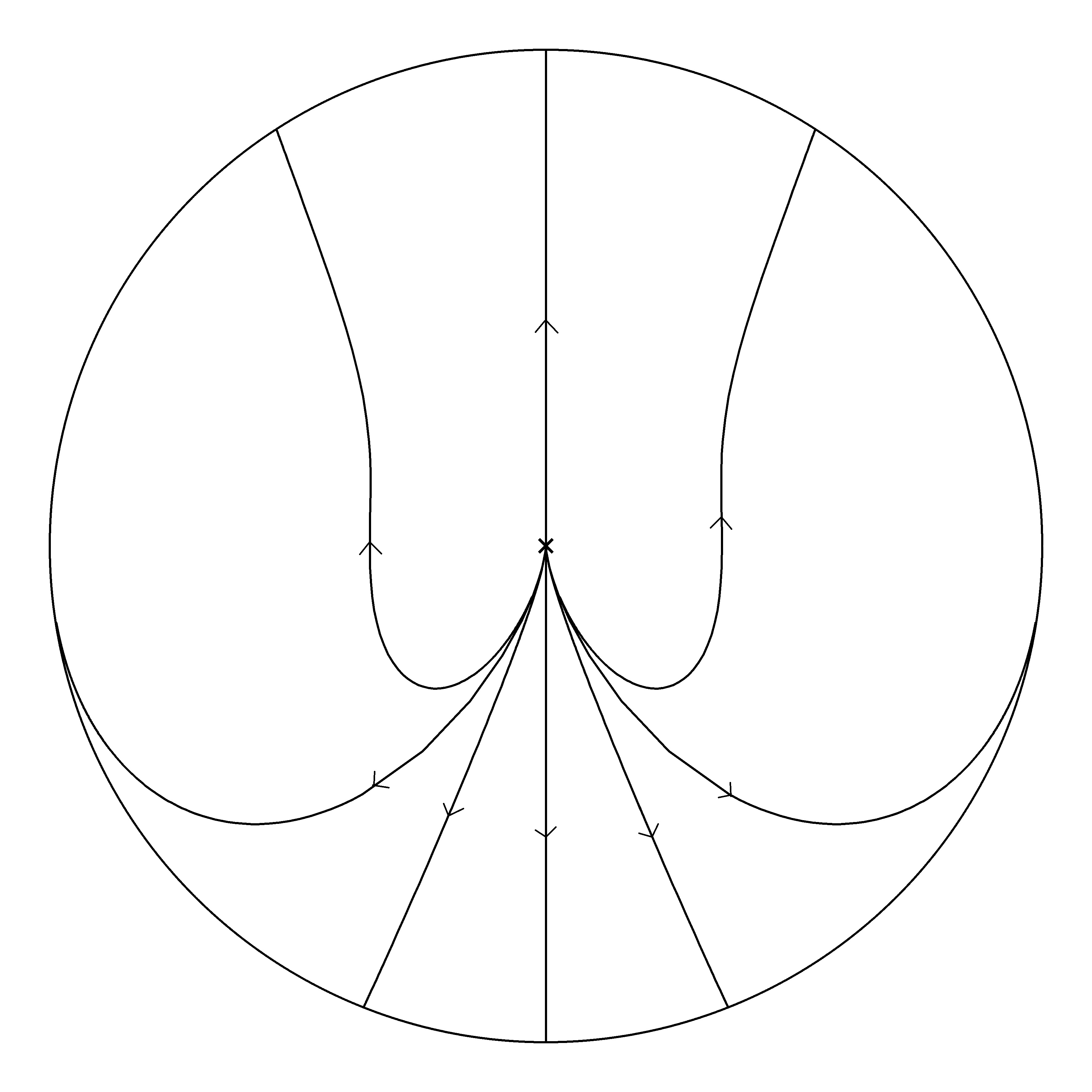}
            \caption{}
        \end{subfigure}
    \end{subfigure}

    \vspace{0.5cm} 

    \begin{subfigure}[b]{0.65\textwidth}
        \centering
        \begin{subfigure}[b]{0.48\textwidth}
            \centering
            \includegraphics[width=\textwidth]{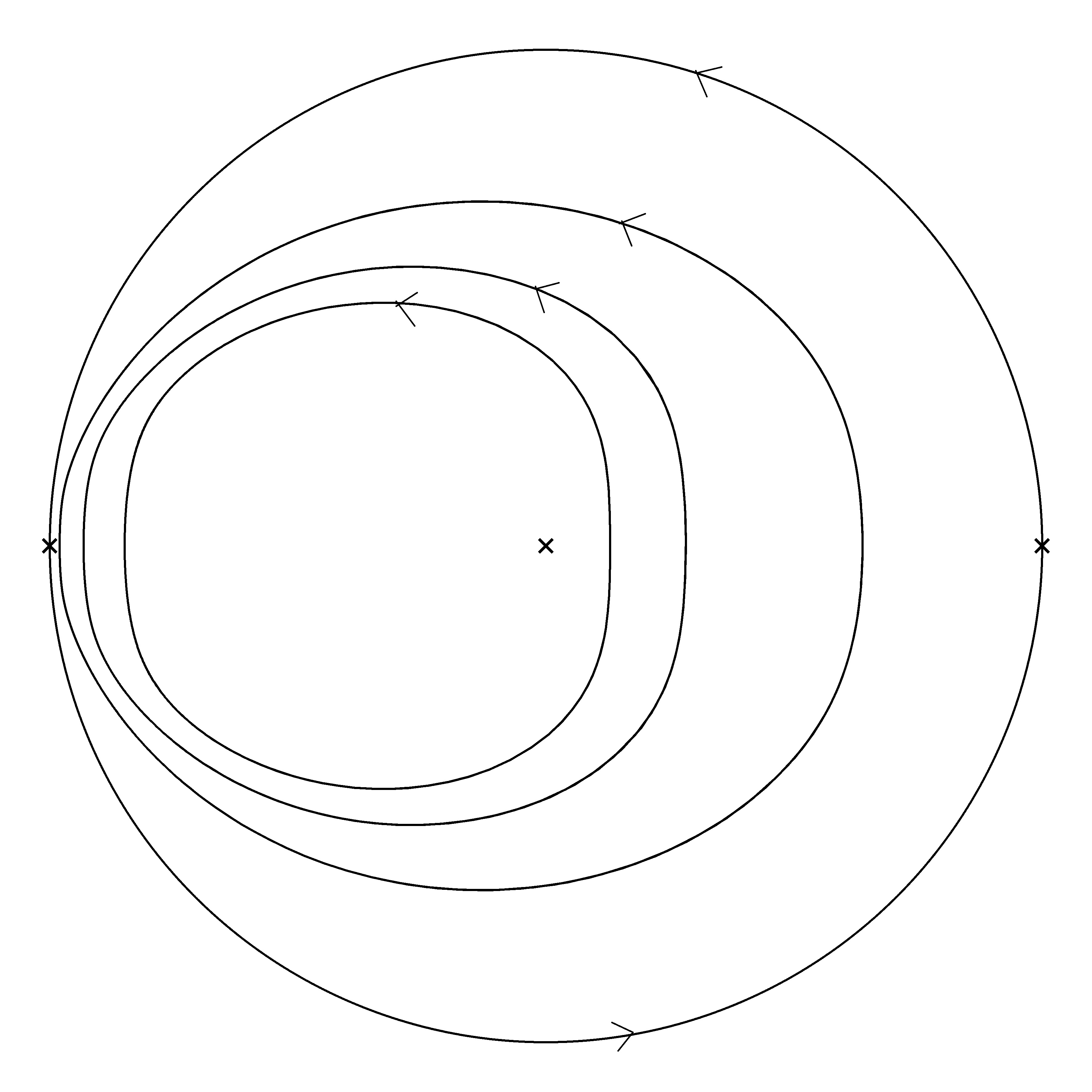}
            \caption{}
        \end{subfigure}
        \hfill
        \begin{subfigure}[b]{0.48\textwidth}
            \centering
            \includegraphics[width=\textwidth]{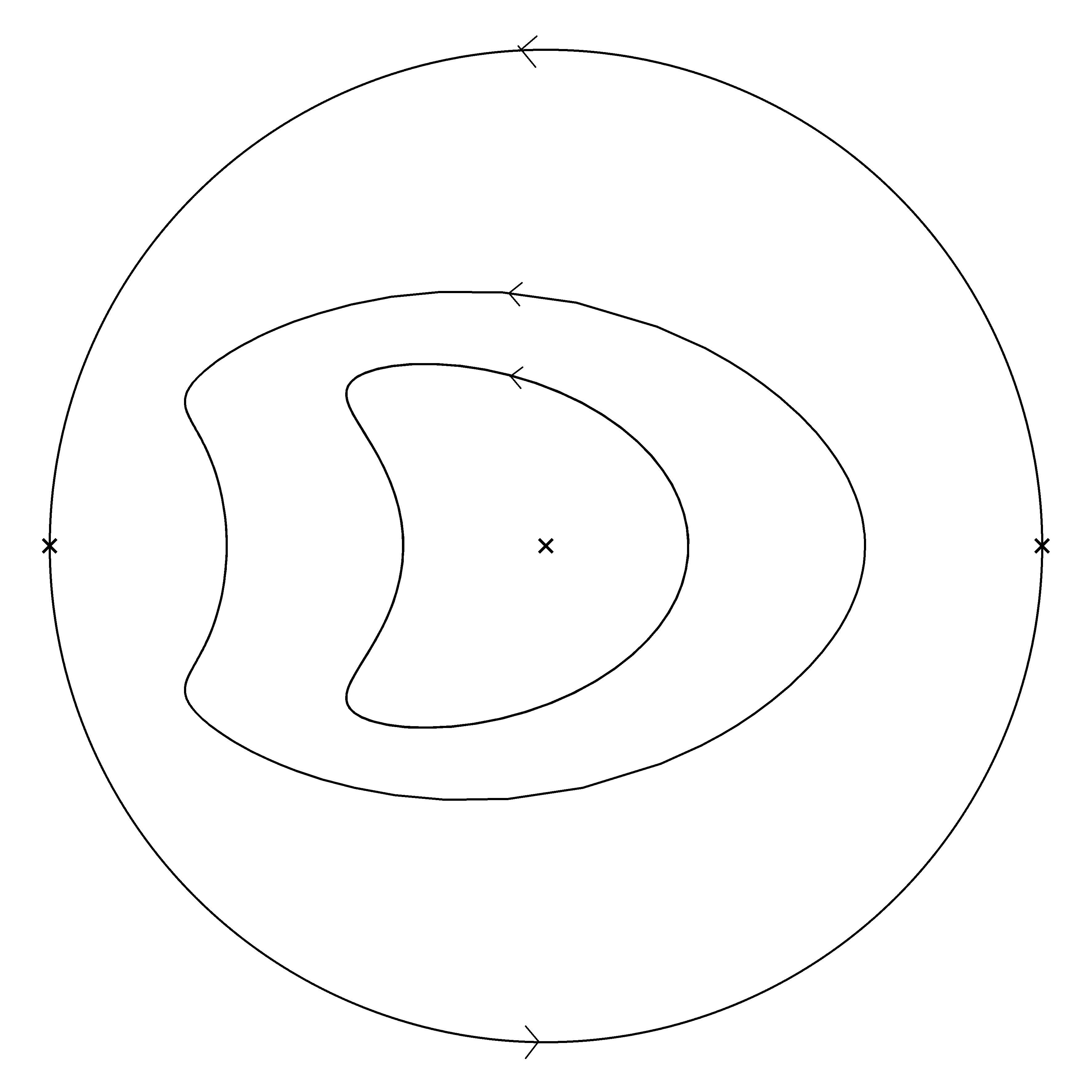}
            \caption{}
        \end{subfigure}
    \end{subfigure}

    \caption{Portraits of quasi-homogeneous systems corresponding to $\mathcal{H}_{1}$}
    \label{fig:2x2_images}
\end{figure}

 For $\mathcal{H}_{0}$, it's a constant system. The global structure is not difficult to be obtained, which is a constant vector field. The portraits is omitted here. Its corresponding quasi-homogeneous systems are (2a) and (3c). We analyze the systems directly taking the \text{Poincar\'{e}} transformations. Then they have infinity singularities located at the end of the $x-$axis. Notice that they are respectively symmetric about the $y-$axis and the $x-$axis. Then we can get their phase portraits respectively in Figure 7 (A) and (B), which are actually topologically equivalent.
 \begin{figure}[H]
    \centering
    \begin{subfigure}[b]{0.65\textwidth}
        \centering
        \begin{subfigure}[b]{0.48\textwidth}
            \centering
            \includegraphics[width=\textwidth]{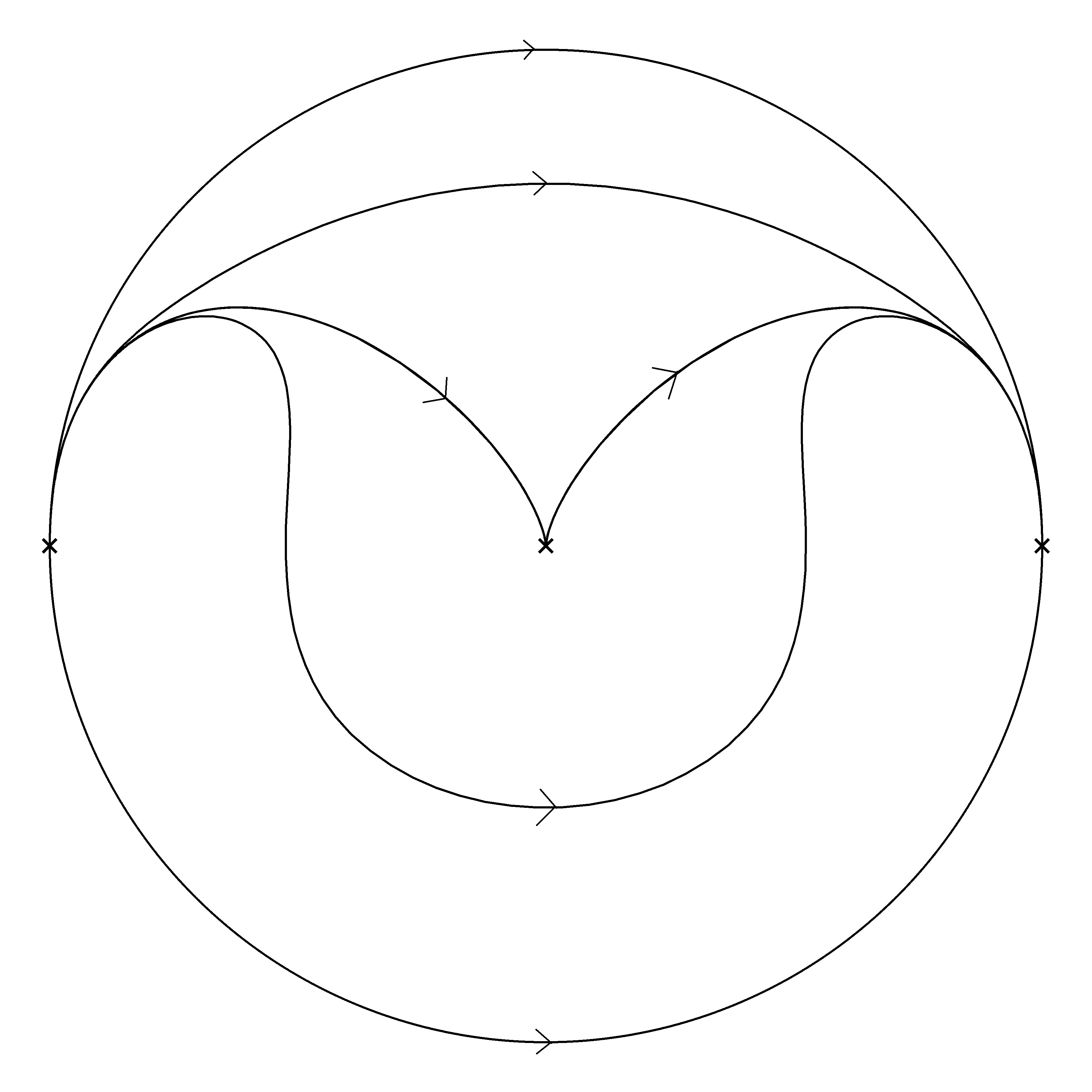}
            \caption{}
        \end{subfigure}
        \hfill
        \begin{subfigure}[b]{0.48\textwidth}
            \centering
            \includegraphics[width=\textwidth]{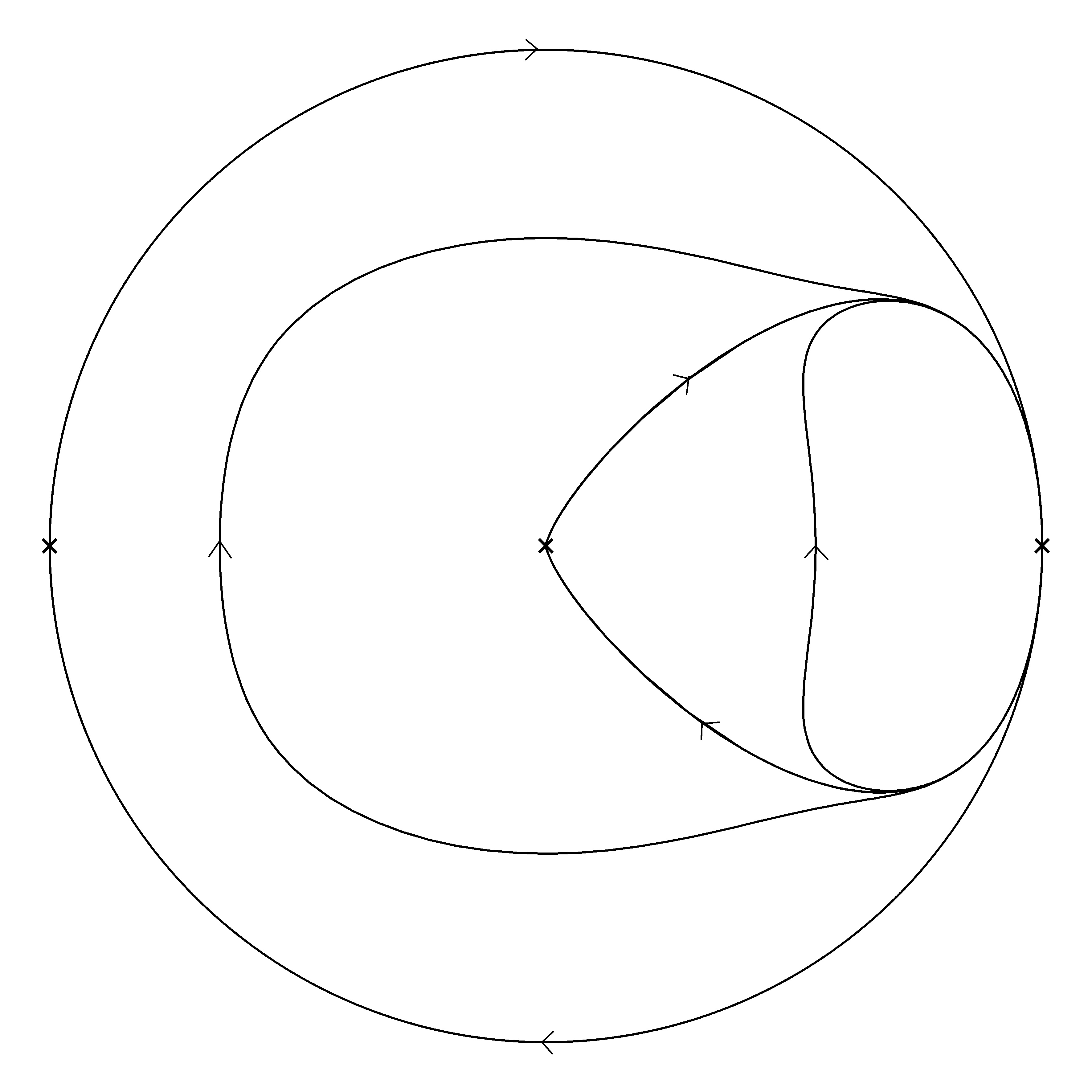}
            \caption{}
        \end{subfigure}
    \end{subfigure}
    \caption{Portraits of quasi-homogeneous systems corresponding to $\mathcal{H}_{0}$}
    \label{fig:two_side_by_side}
\end{figure}

 Finally, the paper derives the parameter conditions under which distinct global phase portraits occur, and classifies these portraits in the sense of topological equivalence. As a result, we have the following theorems.
 \begin{theorem}
	The global phase portraits of quadratic quasi-homogeneous differential systems are topologically equivalent to one of the following portraits: Figure 6 (A), (B) and Figure 7 (A) without taking into account the direction of the time.
\end{theorem}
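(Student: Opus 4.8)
The plan is to combine the canonical classification of Lemma \ref{lemma-1} with the reduction to homogeneous systems already carried out in this section. By Lemma \ref{lemma-1}, after a rescaling every quadratic quasi-homogeneous but non-homogeneous system is one of (2a), (2b), (2c). By the quadratic reduction theorem above, restricting to the appropriate half-plane and applying the homeomorphic change \eqref{eqn-15} sends (2a) to the constant field $\mathcal{H}_{0}$ (the system $\dot x=2,\ \dot y=3$ on $x>0$), and sends both (2b) and (2c) to the linear field $\mathcal{H}_{1}$ (namely $\dot x=ax,\ \dot y=2(x+y)$ and $\dot x=x+y,\ \dot y=2ay$, each on $y>0$). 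Hence it suffices to determine which global portraits these two reduced families produce and then to transport them back through the inverse of \eqref{eqn-15} and the reflection symmetry of the symmetry lemma.

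First I would dispose of (2a). Its reduced system $\mathcal{H}_{0}$ is a nonvanishing constant vector field, whose Poincar\'e compactification has already been described above: every orbit runs to the single infinite singularity at the end of the $x$-axis. Since (2a) has minimal weight vector $(3,2,2)$ with $s_1$ odd and $s_2$ even, the symmetry lemma makes it invariant under $(x,y)\to(-x,y)$; reflecting the half-plane picture across the $y$-axis glues into exactly the portrait Figure 7(A), consistent with the identification of (2a) with the $y$-axis-symmetric member of the $\mathcal{H}_{0}$ pair.

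The crux is to show that for (2b) and (2c) only the saddle and node cases of $\mathcal{H}_{1}$ occur, so that Figure 6(C) and 6(D) are excluded. The coefficient matrices of the two reduced linear systems are
\[
M_{2b}=\begin{pmatrix} a & 0 \\ 2 & 2 \end{pmatrix},\qquad
M_{2c}=\begin{pmatrix} 1 & 1 \\ 0 & 2a \end{pmatrix},
\]
with eigenvalue sets $\{a,2\}$ and $\{1,2a\}$ respectively. Each matrix is triangular, so its eigenvalues are automatically real; consequently the origin of $\mathcal{H}_{1}$ can never be a focus or a center, ruling out the focus/center portraits. For $a>0$ the two eigenvalues share a sign, giving a node, and the corresponding quasi-homogeneous portrait is Figure 6(B); for $a<0$ the eigenvalues have opposite signs, giving a saddle, and the portrait is Figure 6(A). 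The standing hypothesis $a\neq 0$ keeps the origin non-degenerate, so these are the only two topological types. Both weight vectors $(2,1,2)$ and $(2,1,1)$ have $s_1$ even and $s_2$ odd, so the symmetry lemma supplies invariance under $(x,y)\to(x,-y)$, and reflecting across the $x$-axis reconstructs the full portrait.

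The main obstacle I anticipate is the final transport-and-glue step: confirming that the homeomorphism \eqref{eqn-15} on the half-plane, composed with the reflection, assembles the half-plane portrait of the homogeneous system into a globally consistent portrait on the whole Poincar\'e disk without creating spurious separatrices or altering the behavior at infinity. I expect this to go through because \eqref{eqn-15} is a homeomorphism on $y>0$ (resp.\ $x>0$) that preserves the orbit topology, and the separatrices of $\mathcal{H}_{1}$ lie precisely along the coordinate axes, which are the fixed lines of the relevant reflection; therefore the reflected copy matches continuously along that axis. Collecting the three cases, the only global phase portraits that arise for quadratic quasi-homogeneous but non-homogeneous systems are Figure 6(A), Figure 6(B), and Figure 7(A), which is the assertion of the theorem.
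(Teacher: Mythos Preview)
Your argument is correct and is essentially the completion of what the paper leaves implicit: the paper states this theorem without a separate proof, treating it as a summary of the catalogue assembled in the preceding pages, and in particular never spells out \emph{why} the focus and center portraits of Figure~6 cannot occur for the quadratic family. Your observation that the reduced linear systems $\tilde{(2b)}$ and $\tilde{(2c)}$ have triangular coefficient matrices, hence automatically real eigenvalues, is exactly the missing justification and is the cleanest way to exclude Figures~6(C) and~6(D).

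One small inaccuracy in your gluing discussion: it is not true that both separatrices of the saddle in $\mathcal{H}_{1}$ lie along the coordinate axes. For $M_{2b}$ the eigenvector for eigenvalue $2$ is $(0,1)^{T}$, so the $\tilde y$-axis is invariant, but the eigenvector for $a$ is generically oblique; likewise for $M_{2c}$ only the $\tilde x$-axis is an eigenspace. This does not damage your argument---the symmetry lemma and the homeomorphism \eqref{eqn-15} on the half-plane suffice to glue the two halves regardless of where the second separatrix sits, and the paper itself invokes only topological equivalence (``we can always consider the condition where separatrixes can only be $x$-axis and $y$-axis'') rather than claiming the separatrices are literally axial. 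Just drop the parenthetical claim that both separatrices are fixed by the reflection and the proof stands.
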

 \begin{theorem}
	The global phase portraits of cubic quasi-homogeneous differential systems are topologically equivalent to one of the following portraits: Figure 4, Figure 5, Figure 6 and Figure 7 (B) without taking into account the direction of the time.
\end{theorem}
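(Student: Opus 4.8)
The plan is to prove this classification by assembling the pieces already established for the three homogeneous target systems, rather than re-analyzing each cubic quasi-homogeneous form from scratch. First I would invoke Lemma \ref{lemma-2} to reduce the problem to the seven canonical forms (3a)--(3g): every cubic quasi-homogeneous but non-homogeneous system is, after a rescaling of the variables, one of these. Then, applying the cubic reduction theorem (whose proof exhibits the transformed systems $\tilde{(3a)}$--$\tilde{(3g)}$), I would record the correspondence between each canonical form and one of the homogeneous systems $\mathcal{H}_2$, $\mathcal{H}_1$, $\mathcal{H}_0$ on the appropriate half-plane: the form (3d) corresponds to $\mathcal{H}_2$; the forms (3a), (3b), (3e), (3f), (3g) each reduce to the linear system $\mathcal{H}_1$; and (3c) reduces to the constant system $\mathcal{H}_0$.

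Next I would read off the admissible global portraits for each target from the results proved earlier in this section. For $\mathcal{H}_2$ the global phase portraits of the corresponding quasi-homogeneous system (3d) were already classified into the eight portraits of Figures 4 and 5. For $\mathcal{H}_1$, which is linear, the origin is a saddle, node, focus, or center, and the associated quasi-homogeneous portraits are exactly the four displayed in Figure 6, the focus case collapsing topologically to the center case as noted in the discussion of $\mathcal{H}_1$. For $\mathcal{H}_0$, a constant field, the cubic representative (3c) yields Figure 7 (B). Collecting these, every cubic quasi-homogeneous but non-homogeneous system is topologically equivalent to a portrait in Figure 4, Figure 5, Figure 6, or Figure 7 (B), which is the assertion.

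The step requiring most care is the transfer of the half-plane analysis to the full plane and the verification that no topological type is lost in the process. The change \eqref{eqn-15} is a homeomorphism only off the coordinate axes, being singular on $x=0$ (respectively $y=0$), so the topological conjugacy between the quasi-homogeneous field and its homogeneous companion is valid only on the open half-plane. To recover the global picture I would invoke the symmetry lemma: according to the parity of the minimal weight vector, each form (3a)--(3g) is invariant, up to time reversal, under one of $(x,y)\mapsto(x,-y)$, $(x,y)\mapsto(-x,y)$, or $(x,y)\mapsto(-x,-y)$, which lets me reflect the half-plane portrait across the invariant axis and glue the two halves along it. The genuine obstacle here is controlling the behavior on that invariant axis, where \eqref{eqn-15} degenerates: one must confirm that the straight-line separatrices of the homogeneous system become the curved invariant separatrices of the quasi-homogeneous system connecting the origin to the singularities at infinity, exactly as was carried out explicitly for (3d), and that the local structure at the origin and along the equator of the Poincar\'{e} sphere matches on both sides of the gluing. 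Once this is checked for each symmetry type, the enumeration over (3a)--(3g) is finite and the theorem follows by inspecting the parameter ranges against the figures.
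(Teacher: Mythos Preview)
Your proposal is correct and follows essentially the same approach as the paper: the paper presents this theorem as a summary statement with no separate proof, relying on the preceding classification of (3a)--(3g) into $\mathcal{H}_2$, $\mathcal{H}_1$, $\mathcal{H}_0$ (Theorem~9), the analysis of (3d) yielding Figures~4 and~5 (Theorem~11), the linear analysis for $\mathcal{H}_1$ yielding Figure~6, and the constant case $\mathcal{H}_0$ with (3c) yielding Figure~7~(B). Your explicit discussion of the half-plane--to--full-plane transfer via the symmetry lemma is slightly more detailed than what the paper spells out at this point, but it is exactly the mechanism the paper invokes throughout the section.
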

 

\section*{Acknowledgments}
The authors are grateful to the referees
for their careful checking and helpful comments.



\graphicspath{{figures/}}
\DeclareGraphicsExtensions{.pdf,.eps,.png,.jpg,.jpeg}
\end{document}